\def\Comp{\mathop {\fam 0 Comp}\nolimits}
\def\Mor{\mathop {\fam 0 Mor}\nolimits}
\def\chark{\mathop {\fam 0 char}\nolimits}
\def\idd{\mathop {\fam 0 id}\nolimits}
\def\Hom{\mathop {\fam 0 Hom}\nolimits}
\def\End{\mathop {\fam 0 End}\nolimits}
\def\Cend{\mathop {\fam 0 Cend}\nolimits}
\def\Curr{\mathop {\fam 0 Cur}\nolimits}
\def\Derr{\mathop {\fam 0 Der}\nolimits}
\def\codim{\mathop {\fam 0 codim}\nolimits}
\def\Coeff{\mathop {\fam 0 Coeff}\nolimits}
\def\oo#1{\mathrel{{}_{(#1)}}}
\newtheorem{thm}{Theorem}
\newtheorem{prop}{Proposition}
\newtheorem{cor}{Corollary}
\theoremstyle{definition}
\newtheorem{defn}{Definition}
\newtheorem{remark}{Remark}
\newtheorem{exmp}{Example}
\begin{document}

\title[Associative algebras related to conformal algebras]{Associative
algebras related to conformal algebras}

\author{Pavel Kolesnikov}

\thanks{Partially supported by RFBR 05--01--00230,
Complex Integration Program SB RAS (2006--1.9) and
SB RAS grant for young researchers N.~29.
The author gratefully acknowledges the support of the Pierre Deligne fund
based on his 2004 Balzan prize in mathematics.}

\address{Sobolev Institute of Mathematics, Novosibirsk, Russia}

\address{Novosibirsk State University, Novosibirsk, Russia}

\email{pavelsk@math.nsc.ru}

\begin{abstract}
In this note, we introduce a class of algebras that are in some
sense related to conformal algebras. This class
(called TC-algebras) includes
Weyl algebras and some of their (associative and Lie)
subalgebras. By a conformal algebra we generally mean what is known as
$H$-pseudo-algebra over the polynomial Hopf
algebra $H=\Bbbk[T_1,\dots, T_n]$. Some recent results in structure
theory of conformal algebras are applied to get a description
of TC-algebras.
\end{abstract}

\maketitle

\section{Introduction}\label{sec0}

Conformal algebras, initially introduced as an algebraic tool
for studying operator product expansion (OPE) in conformal
field theory, become  objects of pure algebraic study.
By the axiomatic definition appeared in \cite{K1},
a conformal algebra is a linear space $C$ over a field $\Bbbk $
($\chark \Bbbk =0$) endowed with a linear map $T$ and with a family
of operations $(\cdot\oo{n} \cdot ): C\otimes C \to C$,
where $n$ ranges over the set of non-negative integers,
such that:
\begin{itemize}
\item[(C1)] for every $a,b\in C$ only a finite number of $a\oo{n} b $ is nonzero;
\item[(C2)] $Ta\oo{n} b = -n a\oo{n-1} b$;
\item[(C3)] $a\oo{n} Tb = T(a\oo{n} b) + na\oo{n-1} b$.
\end{itemize}

These relations provide a formalization of the following structure.
Suppose $A$ is an algebra (not necessarily associative), and let
$A[[z,z^{-1}]]$ stands for the space of all formal power series over~$A$
in one variable~$z$. Consider
\[
T=\frac{d}{dz}, \quad
(a\oo{n}b)(z) = \mathrm{Res}_{w} a(w)b(z)(w-z)^n,\quad n\ge 0,
\]
where $\mathrm{Res}_{w}x(w,z)\in A[[z,z^{-1}]]$ denotes the coefficient
of $w^{-1}$
in a formal power series $x\in A[[z,z^{-1},w,w^{-1}]]$.
A pair of series $a,b\in A[[z,z^{-1}]]$
is said to be local, if
$a(w)b(z)(w-z)^N =0$ for sufficiently large~$N$.
A space of pairwise mutually local series which is closed
with respect to $T$ and $(\cdot\oo{n}\cdot)$, $n\ge 0$,
is a conformal algebra.

Structure theory of conformal algebras started with \cite{DK,K1,K2},
where the classification of simple and semisimple associative and Lie
conformal algebras was obtained in ``finite'' case,
i.e., if $C$ is a finitely generated $\Bbbk[T]$-module.
In \cite{Z1}, the same result was obtained for Jordan conformal algebras.
In \cite{Ko1,Ko2}, the next step was done: the structure of
some classes of infinite (i.e., infinitely generated
over $\Bbbk[T]$) associative conformal algebras was clarified.

Various features of ordinary algebras can be translated to conformal algebras.
Such a translation is not a ``word-to-word'' one, but there is a general approach
how to get an analogue of some notion (construction) for conformal algebras.
This comes from the fact that ordinary and conformal algebras are
just algebras in certain multicategories corresponding to
Hopf algebras $\Bbbk $ and $\Bbbk[T]$, respectively.

The notion of a multicategory goes back to Lambek \cite{La},
but we will mainly follow \cite{BD} in the exposition.
An algebra in a multicategory is a functor from an operad
$\mathcal C$ to the category.
Given a Hopf algebra $H$, one may construct a multicategory
$\mathcal M^*(H)$ associated with~$H$. An algebra in $\mathcal M^*(H)$
is called an $H$-pseudo-algebra~\cite{BDK}.

In Section~\ref{sec1} we state the general definition of a multicategory.
Section~\ref{sec3} is devoted to the construction
of the category $\mathcal M^*(H)$, where $H$ is a cocommutative Hopf
algebra.
In Section~\ref{sec4} we introduce what is a TC-algebra,
i.e., an ordinary algebra with conformal structure,
and state structure theorems for such algebras.

Throughout the paper, $\Bbbk $ is an algebraically closed field of
characteristic zero.

\section{Multicategories}\label{sec1}

Let us first fix some notation.

Suppose $m\ge n\ge 1$ are two integers.
An $n$-tuple of integers
$\pi=(m_1,\dots ,m_n)$, $m_i\ge 1$,
such that
$m_1+\dots +m_n=m$
we call an $n$-{\it partition} of $m$
 (but we do not assume
$m_i\le m_{i+1}$).
Every partition $\pi $ can be considered as a surjection
(also denoted by $\pi $)
from $\{1,\dots, m\}$ onto $\{1,\dots, n\}$ in the natural way:
\begin{gather*}
\pi (1)=\dots =\pi(m_1) =1, \\
\pi (m_1+1)=\dots =\pi(m_1+m_2) =2, \\
\hdots \\
\pi (m-m_n)= \dots = \pi(m)=n.
\end{gather*}
Therefore, $\pi $ determines one-to-one correspondence
between $\{1,\dots, m\}$ and the set of pairs
$\{(i,j) \mid i=1,\dots, n,\, j=1,\dots, m_i\}$.
This correspondence we will denote by
$\overset{\pi}{\leftrightarrow}$.

By $S_n$, $n\ge 1$, we denote the symmetric group on $\{1,\dots ,n\}$.
Suppose $\pi=(m_1,\dots ,m_n)$ is a partition,
$\sigma \in S_n$.
Then $\sigma \pi=(m_{\sigma^{-1} (1)},\dots ,m_{\sigma^{-1} (n)})$
is again a partition. Thus, $S_n$ acts on the set of all $n$-partitions
of~$m$.

Given an $n$-partition $\pi $ of $m$, $\sigma \in S_n$,
and a family $\tau_i\in S_{m_i}$ ($i=1,\dots ,n$),
one may define $\sigma ^\pi (\tau_1,\dots ,\tau_n)\in S_m$
in the following way:
if $k\overset{\pi}{\leftrightarrow}(i,j)$
then
\[
\sigma ^\pi (\tau_1,\dots ,\tau_n)(k)
 \overset{\sigma \pi}{\leftrightarrow} (\sigma (i),\tau_i(j)).
\]
This composition rule turns the collection of $S_n$, $n\ge 1$,
into an operad, known as the operad of symmetries.

Also, define what is to be a composition of two partitions:
if $p\ge m\ge n\ge 1$,
$\tau = (p_1,\dots ,p_m)$ is an $m$-partition of~$p$,
$\pi=(m_1,\dots ,m_n)$ is an $n$-partition of~$m$,
then there exists
an $n$-partition of $p$, denoted
$\pi\tau = (q_1,\dots ,q_n)$,
where
$q_1=p_1+\dots p_{m_1}$,
$q_2=p_{m_1+1} + \dots +p_{m_1+m_2}$, etc.

\subsection{Definition of a multicategory}

The following definition is due to \cite{La}. A similar notion was
defined in \cite{BD} as {\it pseudo-tensor category} (see also
\cite{BDK}). We will mainly follow \cite{Le} in the exposition.
Another approach to (essentially) same structures was developed in
a recent book~\cite{Fi}.

\begin{defn}\label{defn1}
Let $\mathcal A$ be a class of objects
such that

\begin{itemize}
\item[(M1)]
for any integer $n\ge 1$
and for any family of objects $A_1,\dots, A_n, A\in \mathcal A$
there exists a set
$P^{\mathcal A}_n(A_1,\dots, A_n; A)$
(simply, $P_n(\{A_i\}; A)$)
of $n$-{\it morphisms};

\item[(M2)]
for any $A_1,\dots ,A_m\in \mathcal A$,
$B_1,\dots ,B_n\in \mathcal A$,
$C\in \mathcal A$,
and for any partition $\pi =(m_1,\dots ,m_n)$
of~$m$
we are given a map
\begin{equation}\label{Comp}
\Comp^\pi :
P_n(\{B_i\}; C)
 \times  \prod\limits_{i=1}^n
P_{m_i}(\{A_{ij}\}; B_i)
\to P_m(A_1,\dots A_m; C),
\end{equation}
We will denote
\[
 \Comp^\pi (\varphi ,\psi_1,\dots, \psi_n)
=\Comp^\pi \big(\varphi ,(\psi_i)_{i=1}^n\big)
= \varphi (\psi_1,\dots ,\psi _n).
\]
Such a map $\Comp $ is called a {\it composition map};

\item[(M3)]
there is a symmetric group action on $n$-morphisms, i.e.,
\begin{eqnarray}\label{Symm}
\sigma: && P_n^{\mathcal A}(A_1, \dots , A_n; A) \to
 P^{\mathcal A}_n (A_{\sigma^{-1} (1)}, \dots , A_{\sigma^{-1} (n)}; A), \\
&&f\mapsto \sigma f, \nonumber
\end{eqnarray}
such that 
$(\sigma\tau)f = \tau(\sigma f)$, $\sigma,\tau \in S_n$.
\end{itemize}

If these structures satisfy the following axioms,
then $\mathcal A$ is said to be a {\it multicategory}.

\begin{itemize}
\item[(A1)]
The composition map is associative.
Namely, suppose we have
three families of objects
$A_h\in \mathcal A$ ($h=1,\dots ,p$),
$B_j\in \mathcal A$ ($j=1,\dots ,m$),
$C_i\in \mathcal A$ ($i=1,\dots ,n$),
a partition
$\tau=(p_1,\dots ,p_m)$ of~$p$,
and a partition
$\pi = (m_1,\dots ,m_n)$ of $m$.
Also, let  $D$ be an object of $\mathcal A$, and
\[
 \psi_j\in P_{p_j}(\{A_{jt}\}; B_j),\quad
\chi_i \in P_{m_i}(\{B_{it}\}; C_i),\quad
\varphi \in P_n(\{C_i\}; D)
\]
be multimorphisms of $\mathcal A$.
Then
\begin{multline}   \label{P-assoc}
\Comp^\tau \big(\Comp^\pi \big(\varphi ,(\chi_i)_{i=1}^n\big),
 (\psi_j)_{j=1}^m \big) \\
=
\Comp^{\pi\tau}\big(\varphi ,
\big(\Comp^{\tau_i} \big(\chi_i, (\psi_{it})_{t=1}^{m_i} \big)_{i=1}^n\big)
\big),
\end{multline}
where $\tau_i = (p_{i1},\dots ,p_{im_i})$ is the ``subpartition'' of $\tau$.

\item[(A2)]
For any
$A\in \mathcal A$ there exists
a ``unit", i.e.,  1-morphism
$\idd_A \in P_1(A; A)$
such that
\begin{equation}  \label{P-unit}
\Comp^{\idd(n)} (f, \idd_{A_1},\dots , \idd_{A_n}) =
\Comp^\varepsilon (\idd_A, f) = f
\end{equation}
for any $f\in P_n(\{A_i\}; A)$; here $\idd(n) $
is the identity partition $(1,\dots ,1)$,
$\varepsilon $ is the trivial one: $\varepsilon =(n)$.

\item[(A3)]
The composition map is equivariant with respect to the symmetric group
action, i.e.,
if $\sigma \in S_n$,
$\pi=(m_1,\dots ,m_n)$
is a partition of $m$,
$\tau_i\in S_{m_i}$ ($i=1,\dots ,n$),
$\psi_i \in P_{m_i}(\{A_{ij}\},B_i)$,
$\varphi \in P_n(\{B_i\}, C)$,
then
\begin{equation}\label{equivar}
\Comp^{\sigma \pi }
\big(\sigma \varphi, (\tau_{\sigma^{-1} (i)}\psi_{\sigma^{-1}(i)})_{i=1}^n \big)
=
\sigma^\pi(\tau_1,\dots ,\tau_n)
\Comp^\pi
\big( \varphi, (\psi_{i})_{i=1}^n \big).
\end{equation}
\end{itemize}
\end{defn}

Suppose $\mathcal A$ and $\mathcal B$ are two multicategories.
A {\it functor} $F: \mathcal A \to \mathcal B$
is a rule $A\mapsto F(A)$, $A\in \mathcal A$, $F(A)\in \mathcal B$,
such that for any
$\varphi \in P_n(\{A_i\}; A)$
there is $F(\varphi )\in P_n(\{F(A_i)\}; F(A))$
and
\begin{itemize}
\item $F$ preserves composition maps, i.e.,
\[
F\big( \Comp^\pi\big(\varphi ,(\psi_i)_{i=1}^n\big)\big)
= \Comp^\pi\big(F(\varphi ), (F(\psi _i))_{i=1}^n\big);
\]
\item $F(\idd_A) = \idd_{F(A)} $;
\item $F(\sigma \varphi )=\sigma F(\varphi )$, $\sigma \in S_n$.
\end{itemize}

A multicategory $\mathcal A$ can be considered as an
ordinary category with respect to
$\Mor^{\mathcal A}(A,B) = P_1^{\mathcal A}(A;B)$.

It is suitable to distinguish a ``linear version" of a
multicategory, assuming
\begin{itemize}
\item
all $P^{\mathcal A}_n(\{A_i\}; A)$ are linear spaces over a ground field $\Bbbk$;
\item
the composition map is polylinear;
\item
symmetric group action is linear.
\end{itemize}
In the linear case, functors are supposed to be
linear maps of spaces of multimorphisms.

\subsection{Algebras in a multicategory}

One of the most natural examples of a multicategory is the
category $\mathcal V_\Bbbk $ of linear spaces over  a field $\Bbbk $,
where $P_n^{\mathcal V_\Bbbk}(A_1,\dots, A_n; A)$
consists of all polylinear maps from
$A_1\times \dots \times A_n$ to $A$.

A multicategory with only one object
is known as an {\it operad} (see, e.g., \cite{Le}).
If $\mathcal C$ is a (linear) operad, $C$ is the object of
$\mathcal C$, then the multicategory structure on $\mathcal C$
is completely determined by the collection of spaces
$\mathcal C(n) = P^{\mathcal C}_n(C,\dots, C; C)$
endowed with the structures (M1)--(M3).

Let us state an example of an operad that will be used later.
Suppose $X=\{x_1,x_2,\dots \}$ is a countable
set of variables, and  $\Bbbk\{X\}$ is the free
(non-associative) algebra generated by $X$ over $\Bbbk $.

\begin{exmp}\label{exmp1}
Let $I$ be a collection of homogeneous polylinear polynomials in $X$.
Denote by $(I)$ the T-ideal of $\Bbbk\{X\}$ generated by $I$.
Then $A=\Bbbk\{X\}/(I)$ is the free algebra in the corresponding
variety.
Define
$\mathcal C_I(n)$
to be the $\Bbbk$-linear span of images in $A$
of all non-associative words of length $n$ in $x_1,\dots ,x_n$.

The action of $S_n$ on $\mathcal C_I(n)$
is just the permutation of variables
(it is well-defined), and the composition $\Comp^\pi $ is the
substitution with relabelling variables (also well-defined):
\begin{multline}
\Comp^\pi:
(f(x_1,\dots ,x_n), g_1(x_1,\dots x_{m_1}), \dots g_n(x_1,\dots ,x_{m_n})
\mapsto \\
f(g_1(x_{11},\dots ,x_{1m_1}), g_2(x_{21},\dots ,x_{2m_2}),\dots
g_n(x_{n1},\dots x_{nm_n})),  \nonumber
\end{multline}
$\pi=(m_1,\dots ,m_n)$.
Here we identify $x_{ij}$ with $x_k$ via the
partition~$\pi $ as above.
Note that any multimorphism $f\in \mathcal C_I(n)$ can be obtained
(up to permutation of variables) as a composition
of $\idd\in \mathcal C_I(1)$ and
$\mu = \overline{x_1x_2}\in \mathcal C_I(2)$.
\end{exmp}

\begin{defn}[\cite{BD}]\label{defnBD}
Let $\mathcal A$ be a multicategory, and let $\mathcal C$
be an operad. A $\mathcal C$-{\em algebra\/} in $\mathcal A$
is a functor $F:\mathcal C\to \mathcal A$.
\end{defn}

This definition allows to consider the class of
 $\mathcal C$-algebras
as a category, but not as a multicategory.
We propose slightly different definition in order
to make the class of algebras to be a multicategory.

Suppose $\mathcal A$ and $\mathcal B$ are two multicategories,
and let
\[
F,G: \mathcal A \to \mathcal B
\]
be two functors.

\begin{defn}\label{defn3}
Consider the class of objects $\mathcal P(\mathcal A, F, G)$
that consists of pairs
$(A, \mu )$,
where
$A\in \mathcal A$,
$\mu\in P_1^{\mathcal B}(F(A); G(A))$.
Define multimorphisms on $\mathcal P(\mathcal A, F, G)$
as those $f\in \mathcal P_n^{\mathcal A}(A_1,\dots A_n; A)$
that satisfy the relation
\begin{equation}         \label{eq:alg}
G(f)(\mu _1,\dots ,\mu _n)
 =
\mu(F(f))\in P^{\mathcal B}_n(\{F(A_i)\}; G(A)),
\end{equation}
where $(A_i,\mu _i)$, $(A,\mu )$ are objects of
$\mathcal P(\mathcal A, F, G)$.
The structure obtained is a multicategory of
$(F,G)$-{\it pseudo-algebras} in~$\mathcal A$.
\end{defn}

For example, if $\mathcal A=\mathcal B =\mathcal V_{\Bbbk}$,
$F(A)=A\otimes A$, $G(A)=A$ then an $(F,G)$-pseudo-algebra in
$\mathcal A$ is just an ordinary algebra over a field.
It is easy to see that in this case
$\mu \in P_2^{\mathcal V_{\Bbbk}}(A,A;A)$,
so there exists a functor $\Phi: \mathcal C_{\emptyset} \to
\mathcal V_{\Bbbk}$, defined by
$\Phi(\overline{x}_1)=\idd_A$,
$\Phi(\overline{x_1x_2})=\mu$.
Therefore, this is also an algebra in the sense of Definition~\ref{defnBD}.
Later we will consider another example of a multicategory,
that leads to the notion of a conformal algebra.

\section{$H$-pseudo-algebras and $H$-conformal algebras}\label{sec3}

In this section, we generally follow~\cite{BDK}.

Let $H$ be a cocommutative bialgebra with a coproduct $\Delta $.
We will use the Sweedler's notation
\[
\Delta(f) = f_{(1)}\otimes f_{(2)}, \quad
(\Delta\otimes \idd)\Delta(f)= (\idd\otimes \Delta)\Delta(f)=
f_{(1)}\otimes f_{(2)}\otimes f_{(3)},\quad \text{etc.}
\]
Define $\Delta^k$, $k\ge 0$, as follows:
$\Delta^0=\idd_H$, $\Delta^{k}=(\idd\otimes \Delta^{k-1})\Delta$,
i.e.,
$\Delta^{k}(f)=f_{(1)}\otimes \dots\otimes f_{(k)}$.

Consider the class of objects
$\mathcal M$ that consists of left modules
over $H$. The following structure turns  $\mathcal M$
into a multicategory called $H$-pseudo-tensor category:
\begin{equation}\label{H-multi}
 P_n^{\mathcal M}(A_1,\dots ,A_n; A)=
\Hom_{H^{\otimes n}}(A_1\otimes \dots \otimes A_n, H^{\otimes n}\otimes_H A)
\end{equation}
(the tensor product $\otimes $ without any index is assumed to be over
the ground field $\Bbbk$).
Here we consider $H^{\otimes k}$ as the outer product of regular right
$H$-modules, i.e.,
\[
F\cdot f = F\Delta^k(f),\quad F\in H^{\otimes k},\ f\in H.
\]

Symmetric groups $S_n$ act on $P_n^{\mathcal M}(\{A_i\};A)$
by permutations of arguments from $A_1,\dots ,A_n$
together with the corresponding permutation of tensor factors
in $H^{\otimes n}$:
\[
 (\sigma f)(a_1,\dots ,a_n)
=
 (\sigma \otimes_H \idd_A)f(a_{\sigma(1)},\dots ,a_{\sigma(n)}),
\]
$f\in P_n^{\mathcal M}(\{A_i\};A)$,
$\sigma \in S_n$, $a_i\in A_i$, $i=1,\dots, n$.
Note that $\sigma $ is an endomorphism of $H$-module
$H^{\otimes n}$ since $H$ is cocommutative.

To define a composition of multimorphisms
$\Comp^\pi$,
$\pi=(m_1,\dots ,m_n)$ is a partition of~$m$,
let us first extend
$f\in P^{\mathcal M}_n(\{A_i\}; A)$ to an $H^{\otimes m}$-linear map
\begin{equation}
f: \bigotimes\limits_{i=1}^n(H^{\otimes m_i}\otimes_H A_i) \to
 H^{\otimes m}\otimes_H A
\end{equation}
as follows:
if
$F_i\in H^{\otimes m_i}$, $a_i\in A_i$, and
\[
f(a_1,\dots ,a_n)=\sum\limits_{j} G_j\otimes _H b_j,\quad
G_j\in H^{\otimes n},
\]
then
\begin{multline}\label{Extended}
f(F_1\otimes_H a_1,\dots, F_n\otimes_H a_n) \\
=
(F_1\otimes\dots\otimes F_n)
 \sum\limits_{j}(\Delta^{m_1-1}\otimes
 \dots\otimes\Delta^{m_n-1})(G_j) \otimes_H b_j .
\end{multline}

Suppose
$\pi=(m_1,\dots ,m_n)$
is a partition,
and consider
$A_{i1},\dots, A_{im_i}\in \mathcal M$,
$B_i \in \mathcal M$, $C\in \mathcal M$,
$f_i\in P^{\mathcal M}_{m_i}(\{A_{ij}\}; B_i)$,
$f\in P^{\mathcal M}_n(\{B_i\}; C)$,
$i=1,\dots, n$.
If
\[
f_i(a_{i1}, \dots, a_{im_i}) =
 \sum\limits_j F_{ij}\otimes _H b_{ij},
 \quad F_{ij}\in H^{\otimes m_i},\ b_{ij}\in B_n,
\]
then define
$\Comp^\pi (f, f_1,\dots, f_n)= f(f_1,\dots,f_n)$
by (\ref{Extended}):
\begin{multline}\label{H-Comp}
f(f_1,\dots, f_n)(a_{11},\dots, a_{1m_1},\dots, a_{n1},\dots, a_{nm_n}) \\
= f(f_1(a_{11},\dots, a_{1m_1}), \dots, f_n(a_{n1},\dots, a_{nm_n}))
\end{multline}

The class $\mathcal M$ with multimorphisms (\ref{H-multi})
and their compositions
(\ref{H-Comp}) is a multicategory denoted by
$\mathcal M^*(H)$ \cite{BDK}.

If $H=\Bbbk$, i.e., $\dim H=1$, then this category is just
the category of linear spaces
$\mathcal V_\Bbbk$ with  polylinear maps as multimorphisms.

Consider the following two functors $F$ and $G$
from $\mathcal M^*(H)$ to $\mathcal M^*(H^{\otimes2})$,
where $H\otimes H$ is considered as the tensor product of
bialgebras:
if $A$ is an $H$-module then
\begin{equation}\label{eq:Func}
F: A \mapsto A\otimes A , \quad
G: A\mapsto H^{\otimes 2}\otimes _H A.
\end{equation}

Indeed, if $f\in P_n^{\mathcal M^*(H)} (A_1,\dots, A_n; A)$,
\begin{gather*}
f(a_1,\dots, a_n) = \sum\limits_j f_{1j}\otimes \dots
 \otimes f_{nj}\otimes_H c_j, \\
f(b_1,\dots, b_n) = \sum\limits_j g_{1j}\otimes \dots
 \otimes g_{nj}\otimes_H d_j,
\end{gather*}
then
\[
F(f)(a_1\otimes b_1, \dots , a_n\otimes b_n)
=\sum\limits_{j,k}
 f_{1j}\otimes g_{1k} \otimes \dots \otimes f_{nj}\otimes g_{nk}
  \otimes_{H^{\otimes 2}} (c_j\otimes d_k).
\]
This expression is well-defined.

In the same way, $G$ acts on multimorphisms as follows:
if
$f\in P_n^{\mathcal M^*(H)} (\{A_i\}; A)$,
\[
f(a_1,\dots, a_n) = \sum\limits_j f_{1j}\otimes \dots \otimes f_{nj}\otimes_H b_j ,
\]
then
\[
G(f)(G_1\otimes_H a_1 , \dots , G_n\otimes_H a_n)
 =
 \sum\limits_{j} G_1\Delta(f_{1j})\otimes \dots \otimes G_n\Delta(f_{nj})
  \otimes_{H^{\otimes 2}} (1\otimes 1\otimes_H b_j),
\]
$G_i\in H^{\otimes 2}$, $f_{ij}\in H$, $b_j\in A$, $i=1,\dots, n$.
This definition is correct since $H$ is cocommutative.
Therefore, $G:A\mapsto H\otimes H\otimes_H A$ is indeed a functor of
multicategories.

\begin{defn}\label{defn41}
An $H$-{\it pseudo-algebra} is an $(F,G)$-pseudo-algebra
in $\mathcal M^*(H)$, where $F$ and $G$ are the functors defined
by~\eqref{eq:Func}.
\end{defn}

This definition is equivalent to the one of \cite{BDK},
where a pseudo-algebra was defined as an $H$-module endowed
with an $(H\otimes H)$-linear map (pseudo-product)
$*: A\otimes A \to H\otimes H\otimes _H A$.

Let $\mathcal C_I $ be the operad from Example~\ref{exmp1}.
Then a $\mathcal C_I$-algebra in $\mathcal M^*(H)$
in the sense of Definition~\ref{defnBD} is also an $H$-pseudo-algebra
since $\mu $ is the image of $\overline{x_1x_2}$.
Conversely, for any $H$-pseudo-algebra $(A,\mu )$
one may construct a functor
$\Phi_A : \mathcal C_\emptyset \to \mathcal M^*(H)$
in such a way that
$\Phi_A(C)=A$,
$\Phi_A(\overline{x_1})=\idd_A$,
$\Phi_A(\overline{x_1x_2}) = \mu $.
(Recall that all multimorphisms of
$\mathcal C_\emptyset $ can be constructed via compositions
and permutations of variables from $\overline{x_1}$
and $\overline{x_1x_2}$.)

In order to show that for $H=\Bbbk[T]$ (with
the canonical Hopf algebra structure)
an $H$-pseudo-algebra is the same as a conformal algebra,
we have to describe the
pseudo-product $\mu :A\otimes A \to H\otimes H\otimes _H A$
in terms of ``usual'' algebraic operations.

Suppose that $H$ is a Hopf algebra with
an antipode~$S$.
One may consider a linear isomorphism
$\Phi: H\otimes H \to H\otimes H$ defined as follows:
\[
\Phi:  f\otimes g \mapsto fS(g_{(1)})\otimes g_{(2)}.
\]
The inverse is easy to find:
$\Phi^{-1}(f\otimes g)= fg_{(1)}\otimes g_{(2)}$.
Therefore, one may choose
an $H$-basis of the product
$H\otimes H$ of regular right modules
in the form
$\{h_{i}\otimes 1\}_{i \in I}$,
where $\{h_i\}_{i\in I} $
is a basis of $H$.
Then we have a well-defined map
\[
\iota : (H\otimes H)\otimes_H A \to H\otimes 1\otimes A\simeq H\otimes A.
\]
Thus, the pseudo-product can be completely described by a family
of binary $\Bbbk $-linear operations
\[
(\cdot\oo{x}\cdot): A\otimes A \to A,
\]
where $x$ ranges over the dual space $X=H^*$.
These operations are defined by
\begin{equation}\label{x-prod}
(a\oo{x}b):=(\langle x, S(\cdot)\rangle\otimes \idd_A)\iota(a*b),
\end{equation}
and satisfy the following axioms:
\begin{itemize}
\item[(H0)] $(a \oo{\alpha x +\beta y}b)=\alpha (a\oo{x}b)+\beta(a\oo{y}b)$;
\item[(H1)] (locality)
$\codim \{x \in X \mid (a \oo{x} b) = 0 \} < \infty$;
\item[(H2)] (sesqui-linearity)
\[
(ha \oo{x} b) = a \oo{xh} b,
\quad
(a \oo{x} hb) = h_{(2)} (a \oo{S(h_{(1)})x} b).
\]
\end{itemize}

An algebraic system obtained on a left $H$-module
$A$ with operations $(\cdot\oo{x}\cdot)$, $x\in X=H^*$,
satisfying the axioms (H0)--(H2)
is  called an $H$-{\it conformal algebra} \cite{BDK}.

In particular, if $H=\Bbbk$ then this is just an ordinary
algebra. If $H=\Bbbk[T]$ then $X\simeq \Bbbk[[t]]$, and it is enough
to consider $x=t^n$, $\langle t^n, T^m\rangle = \delta_{n,m}n!$.
Then
\[
(a\oo{n} b) = (a\oo{t^n}b), \quad n\ge 0,
\]
together with the action of $T$ on $A$ determines the structure
of a conformal algebra on $A$ in the sense of \cite{K1}
(see Introduction). Therefore, a conformal algebra is just
an $H$-pseudo-algebra in the sense of Definition~\ref{defn41}.

The notions of associative (commutative, Lie, etc.)
conformal algebras can also be translated to the language
of pseudo-algebras in a general way.
One approach was proposed
in \cite{Ro1}, using the notion of a coefficient algebra. Namely,
for any conformal algebra $A$ there exists uniquely defined ordinary
algebra $\Coeff A$
(coefficient algebra, or annihilation algebra \cite{K2})
such that $(\Coeff A)[[z,z^{-1}]]$ ``universally
contains" the conformal algebra $A$. If $\mathcal V$ is a variety
of algebras and $\Coeff A \in \mathcal V$ then $A$ is said to
be a $\mathcal V$-conformal algebra.
Such algebras form a category with respect to
homomorphisms of conformal algebras.

Another approach is to apply Definition~\ref{defnBD}.
If $\mathcal V$ is a variety of ordinary algebras, then there
exists a collection $I=I(\mathcal V)$ of
polylinear homogeneous defining identities.
If $A$ is an $H$-pseudo-algebra and the functor
$\Phi_A :\mathcal C_\emptyset \to A$ can be restricted
to $\mathcal C_I$ then it is natural to say that $A$
is a  $\mathcal V$-algebra in $\mathcal M^*(H)$.
The class of all such pseudo-algebras
form a category.

In \cite{Ko} it was shown that the last approach is equivalent
to the one of \cite{Ro1}.

\begin{thm}[\cite{Ko}]\label{thm:var}
Let $\mathcal V$ be a variety of algebras.
A conformal algebra $A$ is a $\mathcal V$-algebra in
$\mathcal M^*(\Bbbk[T])$
if and only if $\Coeff A$ is a $\mathcal V$-algebra.
\end{thm}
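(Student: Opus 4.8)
The plan is to turn the claimed equivalence into a chain of ``iff'' statements, each expressing the same condition on the $n$-products of $A$ in a different language: the pseudo-algebra language at one end, the coefficient-algebra language at the other, and the raw $n$-products in the middle. First I would recall the construction of $\Coeff A$ (after Roitman, \cite{Ro1}): as a $\Bbbk$-space it is spanned by symbols $a(m)$, $a\in A$, $m\in\mathbb Z$, subject to $(Ta)(m)=-m\,a(m-1)$, and its product is $a(m)b(k)=\sum_{s\ge0}\binom{m}{s}(a\oo{s}b)(m+k-s)$ (a finite sum, by (C1)). The key structural point is that the product of two generators is again a $\Bbbk$-linear combination of generators, so the $a(m)$ span $\Coeff A$ \emph{linearly}, not merely as an algebra. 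Hence, for any polylinear nonassociative polynomial $f(x_1,\dots,x_n)$, multilinearity shows that $\Coeff A$ satisfies $f$ if and only if $f(a_1(m_1),\dots,a_n(m_n))=0$ in $\Coeff A$ for all $a_i\in A$ and all $m_i\in\mathbb Z$. Ranging over a set $I=I(\mathcal V)$ of polylinear defining identities, ``$\Coeff A\in\mathcal V$'' becomes precisely: $f(a_1(m_1),\dots,a_n(m_n))=0$ for all $f\in I$, all $a_i$, all $m_i$.

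Next I would unwind the other side. ``$A$ is a $\mathcal V$-algebra in $\mathcal M^*(H)$'', i.e.\ $\Phi_A$ restricts to $\mathcal C_I=\mathcal C_\emptyset/(I)$, is equivalent to $\Phi_A(f)=0$ in $P_n^{\mathcal M^*(H)}(A,\dots,A;A)$ for every $f\in I$: one implication is clear since $I\subseteq(I)$; conversely $\Phi_A$ is a $\Bbbk$-linear functor of operads, so it respects $\Comp$ and the $S_n$-action, and therefore vanishes on the whole operadic ideal generated by $I$, which in characteristic $0$ captures the polylinear part of $(I)$. Evaluating, $\Phi_A(f)=0$ just says $\Phi_A(f)(a_1,\dots,a_n)=0$ in $H^{\otimes n}\otimes_H A$ for all $a_i\in A$.

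It then remains to match $\Phi_A(f)(a_1,\dots,a_n)$ with the family $\{f(a_1(m_1),\dots,a_n(m_n))\}$. Working with $H=\Bbbk[T]$, $S(T)=-T$, $X=H^*$ (so $(a\oo{n}b)=(a\oo{t^n}b)$ and $\langle t^n,T^m\rangle=\delta_{nm}n!$), and the $H$-basis $\{h_{i_1}\otimes\cdots\otimes h_{i_{n-1}}\otimes1\}$ of $H^{\otimes n}$ generalising the map $\iota$ of Section~\ref{sec3}, I would iterate the pseudo-product along the shape of each nonassociative word $w$ (using the extension formula~\eqref{Extended}): this writes $\Phi_A(w)(a_1,\dots,a_n)$ as a sum $\sum_{\vec k}c_{\vec k}\,T_1^{k_1}\cdots T_{n-1}^{k_{n-1}}\otimes_H(\text{the iterated }n\text{-product of the }a_i\text{ of shape }w)$, with explicit factorial-type scalars $c_{\vec k}$; and iterating the product formula of Step~1 writes $w(a_1(m_1),\dots,a_n(m_n))$ as a sum, with binomial coefficients in the $m_i$, of $(\text{iterated }n\text{-products of shape }w)(l)$. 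Both sides are built from the same family of iterated $n$-products, and the two transitions (factorial basis $T^k\leftrightarrow T^k/k!$ on one side, binomial basis $\binom{m}{s}\leftrightarrow m^s$ on the other) are triangular and invertible. Hence $\Phi_A(f)(a_1,\dots,a_n)=0$ for all $a_i$ if and only if all the $\Bbbk$-linear combinations of iterated $n$-products of $a_1,\dots,a_n$ that $f$ produces vanish, if and only if $f(a_1(m_1),\dots,a_n(m_n))=0$ for all $a_i$ and all $m_i$. Chaining this with the previous two paragraphs proves the theorem.

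The main obstacle is the last paragraph: one must actually carry out the iterated expansions, keep track of the coproducts inserted by~\eqref{Extended}, and verify that the factorial/binomial transition matrices are genuinely invertible, so that vanishing transfers \emph{uniformly} in the indices $m_i$ and in the output index $l$ — in particular dealing correctly with the relation $(Ta)(m)=-m\,a(m-1)$ among the generators of $\Coeff A$. Everything else is formal: the linear-span observation for $\Coeff A$, and the reduction from the $T$-ideal $(I)$ to its generators $I$ using that $\Phi_A$ is a functor of operads.
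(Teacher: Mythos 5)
First, a caveat on the comparison itself: the paper gives no proof of Theorem~\ref{thm:var} --- it is imported verbatim from \cite{Ko} --- so there is no in-paper argument to measure you against. Your overall architecture (reduce both sides to polylinear identities, then match the expansion of $\Phi_A(f)(a_1,\dots,a_n)$ in $H^{\otimes n}\otimes_H A$ against the expansion of $f(a_1(m_1),\dots,a_n(m_n))$ in $\Coeff A$) is the standard and correct route, and your first two reductions are sound: the $a(m)$ do span $\Coeff A$ linearly, so membership in $\mathcal V$ reduces to the vanishing of $f(a_1(m_1),\dots,a_n(m_n))$ for $f\in I$; and restricting $\Phi_A$ to $\mathcal C_I$ does reduce to $\Phi_A(f)=0$ for $f\in I$ by functoriality and characteristic zero.

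The genuine gap is in the direction ``$\Coeff A\in\mathcal V\ \Rightarrow\ A$ is a $\mathcal V$-algebra in $\mathcal M^*(\Bbbk[T])$'', and it is not just a matter of verifying that factorial/binomial transition matrices are invertible. Your expansion writes $f(a_1(m_1),\dots,a_n(m_n))$ as a combination of elements $c(l)\in\Coeff A$, where the $c$ are iterated $n$-products lying in $A$; to match the other side of the chain you must conclude that these $c$ (or the relevant combinations of them) vanish \emph{in $A$}. But $\Coeff A$ is the quotient of $A\otimes\Bbbk[t,t^{-1}]$ by the relations $(Ta)(m)+m\,a(m-1)$, so the vectors $c(l)$ are not linearly independent and $c(l)=0$ in $\Coeff A$ does not force $c=0$ in $A$: for instance $Tc=0$ already forces $c(l)=0$ for every $l\neq -1$. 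An invertible change of basis acting on the coefficients of linearly dependent vectors proves nothing, so the chain of equivalences breaks exactly at the point you label ``the main obstacle''. The missing idea is the structural lemma (due to Roitman) that the negative part of $\Coeff A$ is a \emph{faithful} copy of $A$: the map $a\mapsto a(-1)$ is injective, $a(-k-1)=\tfrac{1}{k!}(T^k a)(-1)$, and products of negative-index elements stay in the negative part since $a(m)b(k)=\sum_{s\ge 0}\binom{m}{s}(a\oo{s}b)(m+k-s)$ has all output indices $\le -2$ when $m,k\le -1$. One must therefore specialize all $m_i$ to negative integers, so that $f(a_1(m_1),\dots,a_n(m_n))$ becomes an honest element of $A$ under this identification, and only then run the Vandermonde/invertibility argument over the $m_i$ to isolate the individual iterated $n$-products. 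With that lemma inserted, your plan closes; without it, the backward implication is unproved.
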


\section{TC-algebras}\label{sec4}

In this section, by conformal algebras we mean
$(F,G)$-pseudo-algebras in
$\mathcal M^*(H)$, $H=\Bbbk[T_1,\dots, T_n]$, corresponding
to the functors $F: X\mapsto X\otimes X$,
$G: X\mapsto H\otimes H\otimes_H X$.
The algebra $H$ is considered as a topological algebra with
basic neighborhoods of zero given by powers of the
augmentation ideal
$(T_1,\dots, T_n)$.

Suppose $A$ is a (Hausdorff) topological algebra
endowed with continuous derivations $\partial_1,\dots, \partial_n$.
A map $a: H\to A$ is said to be translation-invariant
(T-invariant, for short) if
\begin{equation}\label{T-inv}
a \dfrac{\partial}{\partial T_i} = \partial _i a, \quad i=1,\dots, n.
\end{equation}
Denote the set of all continuous T-invariant maps from $H$ to $A$
by $\mathcal F(A)$.

\begin{exmp}\label{exmp:Weyl}
(i) Consider $A=\mathbb A_n$, where
\[
\mathbb A_n =\Bbbk\langle p_1,\dots,p_n,
q_1,\dots, q_n \mid [p_i,p_j]=[q_i,q_j]=0,\, [q_i,p_j]=\delta_{i,j}\rangle
\]
(the $n$th Weyl algebra) with respect to
$q$-adic topology, i.e., the system of basic neighborhoods of zero
is given by left ideals
$Q_1 \supset Q_2 \supset \dots$,
\[
Q_k = \sum\limits_{i_1,\dots, i_k=1}^n \mathbb A_n q_{i_1}\dots q_{i_k}, \quad k\ge 1.
\]
Derivations $\partial_i =[\cdot , p_i]$
are continuous, and for any $f\in \Bbbk[p_1,\dots, p_n]$
the map
\[
a_f: T_1^{a_1}\dots T_n^{a_n} \mapsto f q_1^{a_1}\dots q_n^{a_n}
\]
is continuous and T-invariant.

(ii) Consider $W_n\subset \mathbb A_n$,
$W_n=\sum_{i=1}^n\Bbbk[p_1,\dots ,p_n]q_i$,
i.e., the space of all derivations of $H$.
This is a topological Lie algebra with respect to $p$-adic
topology. Derivations $\partial_i =[q_i,\cdot ]$
are continuous, and for any $i=1,\dots, n$
the map
\[
a_i: T_1^{a_1}\dots T_n^{a_n} \mapsto p_1^{a_1}\dots p_n^{a_n}q_i
\]
is continuous and T-invariant.
\end{exmp}

Note that $\mathcal F(A)$
can be considered as an $H$-module with respect to
\[
 ( T_i a )(f) = -a\left (\frac{\partial f}{\partial T_i}\right)
=-\partial _i a(f),
\quad a\in \mathcal F(A),\ f\in H.
\]

If $B$ is a subspace of $A$
then by $\mathcal F(B)$ we denote the space of all
$B$-valued maps from $\mathcal F(A)$.
If $C$ is an $H$-submodule of $\mathcal F(A)$ then by
$\mathcal A(C)$ we denote the subspace
\[
\mathcal A(C) =\{a(f) \mid a\in C,\, f\in H\}.
\]
Note that $B\supseteq \mathcal A(\mathcal F(B))$,
$\mathcal F(\mathcal A(C))\supseteq  C$.

\begin{defn}\label{defn:TC}
A topological algebra $A$ with continuous derivations
$\partial_i$, $i=1,\dots, n$, is said to be a {\it TC-algebra}
if $A=\mathcal A(\mathcal F(A))$.
\end{defn}

For any TC-algebra $A$ the derivations $\partial_i$ necessarily
commute and each of them is locally nilpotent.

For example, the polynomial algebra $H=\Bbbk[T_1,\dots, T_n]$
is an associative TC-algebra, $\idd_H \in \mathcal F(H)$.
If $n$ is even then the same $H$ with respect to
the Poisson bracket
\[
\{f,g\} = \sum\limits_{i=1}^{k}
 \frac{\partial f}{\partial T_{i}}\frac{\partial g}{\partial T_{k+i}}
 - \frac{\partial f}{\partial T_{k+i}}\frac{\partial g}{\partial T_{i}},
\quad n=2k, \ f,g\in H,
\]
is a Lie TC-algebra (as usual, denoted by $P_n$).

The Weyl algebra from Example~\ref{exmp:Weyl} is an associative
TC-algebra.
The Lie algebra $W_n\subset \mathbb A_n^{(-)}$
is also a TC-algebra, as well as its classical
subalgebras $S_n=\{D\in W_n\mid Dv=0\}$ and
$H_n=\{D\in W_n \mid Ds=0\}$,
$v= \mathrm dT_1\wedge \dots \wedge \mathrm dT_n $,
$s = \sum\limits_{i=1}^k \mathrm dT_i\wedge \mathrm dT_{k+i}$,
$n=2k$.

\begin{remark}
Note that $W_n$ is not a TC-subalgebra of $\mathbb A_n^{(-)}$. One
has to consider $p$-adic topology on $W_n$ and set $\partial_i
=[q_i, \cdot]=\frac{\partial}{\partial p_i}$. The same settings
work for $S_n$ and $H_n$. Let us consider $H_n$ in details. It is
well-known that $H_n$ is a homomorphic image of $P_n$: $f\mapsto
\{f,\cdot\}\in H_n$, $f\in H$. It is obvious that the map $a: H\to
H_n$, $a(f)= \{f,\cdot\}$ is continuous and T-invariant.
\end{remark}

TC-algebras form a category where morphisms are continuous homomorphisms
of algebras commuting with $\partial_i$, $i=1,\dots, n$.
By an ideal of a TC-algebra $A$ we mean
a $\partial_i$-invariant ideal of~$A$.

\begin{prop}
{\rm (i)} If $A$ is a TC-algebra then the matrix algebra
$\mathbb M_N(A)$ is also a TC-algebra.

{\rm (ii)} If $A$ is an associative TC-algebra and
$\sigma :A\to A$ is a continuous $\partial_i$-invariant
involution then $\mathrm{Sym}(A,\sigma)$ and
$\mathrm{Skew}(A,\sigma)$ are Jordan and Lie TC-algebras,
respectively.

{\rm (iii)} If $A$ is a TC-algebra for $H'=\Bbbk[T_1,\dots, T_r]$,
$n\ge r$, then the polynomial algebra
$A[T_{r+1},\dots , T_{n}]$ is a TC-algebra for
$H=\Bbbk[T_1,\dots , T_n]$.
\end{prop}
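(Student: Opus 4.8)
The plan is to prove all three parts by a single scheme. In each case write $B$ for the new algebra ($\mathbb M_N(A)$, or $\mathrm{Sym}(A,\sigma)$/$\mathrm{Skew}(A,\sigma)$, or $A[T_{r+1},\dots,T_n]$). First I would fix a topology on $B$ and dispose of the routine items --- that $B$ is a Hausdorff topological algebra, that the prescribed $\partial_i$ are continuous derivations of $B$, and that they act as derivations of the Jordan/Lie product when that is at issue (these follow from the corresponding facts for $A$ together with classical facts about $\mathrm{Sym}$ and $\mathrm{Skew}$); commutativity and local nilpotence of the $\partial_i$ then hold automatically by the general property of TC-algebras. The real content is, first, to identify $\mathcal F(B)$ in terms of $\mathcal F(A)$ and, second, given $b\in B$, to produce $\hat a\in\mathcal F(B)$ and $f\in H$ with $b=\hat a(f)$; for the latter I use $A=\mathcal A(\mathcal F(A))$ together with the already-noted fact that $\mathcal A(C)$ is a linear subspace.

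For (i) I would topologize $\mathbb M_N(A)$ as a product of $N^2$ copies of $A$ and let the $\partial_i$ act entrywise. Then a continuous $T$-invariant map $H\to\mathbb M_N(A)$ is exactly a matrix of such maps $H\to A$, so $\mathcal F(\mathbb M_N(A))$ is $\Bbbk$-spanned by the maps $a\otimes E_{kl}\colon f\mapsto a(f)E_{kl}$ with $a\in\mathcal F(A)$ and $E_{kl}$ a matrix unit. Since $(a\otimes E_{kl})(f)=a(f)E_{kl}$ and $a(f)$ runs over $A=\mathcal A(\mathcal F(A))$, the subspace $\mathcal A(\mathcal F(\mathbb M_N(A)))$ contains every $bE_{kl}$ with $b\in A$, hence all of $\mathbb M_N(A)$.

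For (ii) I would give $\mathrm{Sym}(A,\sigma)$ and $\mathrm{Skew}(A,\sigma)$ the subspace topology; as $\sigma\partial_i=\partial_i\sigma$, each $\partial_i$ preserves both. The assignment $a\mapsto\sigma\circ a$ takes $\mathcal F(A)$ to itself (continuity is clear, and $T$-invariance of $\sigma\circ a$ again uses $\sigma\partial_i=\partial_i\sigma$), so $\mathcal F(\mathrm{Sym}(A,\sigma))$ and $\mathcal F(\mathrm{Skew}(A,\sigma))$ are its $(+1)$- and $(-1)$-eigenspaces inside $\mathcal F(A)$. Now if $b\in\mathrm{Sym}(A,\sigma)$, write $b=a(f)$ with $a\in\mathcal F(A)$, $f\in H$; then $a':=\frac{1}{2}(a+\sigma\circ a)$ lies in $\mathcal F(\mathrm{Sym}(A,\sigma))$ and $a'(f)=\frac{1}{2}(b+\sigma(b))=b$. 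The skew case is identical with $a':=\frac{1}{2}(a-\sigma\circ a)$.

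For (iii), with $H=\Bbbk[T_1,\dots,T_n]\supset H'=\Bbbk[T_1,\dots,T_r]$, I would let $\partial_1,\dots,\partial_r$ act through $A$ and trivially on the new variables, and set $\partial_j=\partial/\partial T_j$ for $j>r$; on $A[T_{r+1},\dots,T_n]$ I would take the topology whose basic neighbourhoods of zero are the additive subgroups $\{\sum_\gamma c_\gamma T^\gamma:\ c_\gamma\in U\text{ whenever }|\gamma|<k\}$, $U$ a basic neighbourhood of zero in $A$ and $k\ge1$, so that $A$ becomes a topological subalgebra and a monomial $T^\gamma$ in the new variables tends to zero as $|\gamma|\to\infty$. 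To each $a\in\mathcal F(A)$ (formed over $H'$) I attach $\hat a\colon H\to A[T_{r+1},\dots,T_n]$, $\hat a(f'T^\gamma)=a(f')T^\gamma$ for $f'\in H'$; a direct computation shows $\hat a$ is continuous and $T$-invariant for all $n$ derivations. Given $b=\sum_\gamma b_\gamma T^\gamma$, a finite sum with $b_\gamma\in A$, write $b_\gamma=a_\gamma(f'_\gamma)$ with $a_\gamma\in\mathcal F(A)$, $f'_\gamma\in H'$; then $b_\gamma T^\gamma=\hat a_\gamma(f'_\gamma T^\gamma)$, and since $\mathcal A(\cdot)$ is a subspace we get $b\in\mathcal A(\mathcal F(A[T_{r+1},\dots,T_n]))$. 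I expect the main obstacle to be exactly this last construction: pinning down the topology on $A[T_{r+1},\dots,T_n]$ so that it is simultaneously Hausdorff, makes multiplication and every $\partial_i$ continuous, and makes all the $\hat a$ continuous --- the continuity of $\hat a$ hinging on the fact that a monomial of $H$ lying in a high power of the augmentation ideal has either its $A$-part small (by continuity of $a$) or its degree in $T_{r+1},\dots,T_n$ large. Once the topology is fixed, all three statements follow from the two ingredients above.
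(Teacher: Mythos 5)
Your proposal is correct and follows essentially the same route as the paper: part (ii) via $a\mapsto\frac{1}{2}(a\pm\sigma\circ a)$ and part (iii) via $\hat a(f'T^\gamma)=a(f')T^\gamma$ (with the ``usual'' topology and derivations) are exactly the paper's constructions. The only cosmetic difference is in (i), where the paper rewrites all entries over a single common monomial, producing one $b\in\mathbb M_N(\mathcal F(A))$ and one $T^\alpha$ with $x=b(T^\alpha)$, whereas you decompose $x$ into matrix units and appeal to linearity of $\mathcal A(\cdot)$ --- which is the same common-monomial trick in disguise.
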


\begin{proof}
(i) Suppose
$x=(a_{ij}(f_{ij})) \in \mathbb M_N(A)$.
Then $x$ can be presented as
$x=(b_{ij} (T^\alpha))$
for an appropriate
$\alpha =(\alpha_1,\dots,\alpha_n)\in \mathbb Z_+^n$,
$T^\alpha =T_1^{\alpha_1}\dots T_n^{\alpha_n}$.
Hence, $x=b(T^\alpha)$, $b=(b_{ij})\in \mathbb M_N(\mathcal F(A))$.

(ii) Just note that for every $a\in \mathcal F(A)$
the maps $f\mapsto a(f)\pm \sigma a(f)$, $f\in H$, also
belong to $\mathcal F(A)$.

(iii) Define additional derivations and topology on $A'=A[T_{r+1},\dots, T_n]$
in the usual way. Then for any continuous T-invariant map
$a:H'\to A$ the map
$a': H\to A'$ given by
\[
a'(T_1^{a_1}\dots  T_n^{a_n}) = a(T_1^{a_1}\dots T_r^{a_r})T_{r+1}^{a_{r+1}}\dots T_n^{a_n}
\]
is also continuous and T-invariant.
\end{proof}

The problem of studying a TC-algebra $A$ can be reduced
to the space $\mathcal F(A)$. The last object is in
some sense smaller,
but the algebraic structure on
$\mathcal F(A)$ is more complicated.

One may define a family of bilinear operations
\[
(\cdot_{(f)}\cdot): \mathcal F(A)\otimes \mathcal F(A) \to \mathcal F(A),
\quad f\in H,
\]
in the following way:
\begin{equation}\label{eq:f-product}
(a_{(f)} b)(g) =
 a(f_{(1)})b(S(f_{(2)})g),\quad a,b\in \mathcal F(A),\ f,g \in H,
\end{equation}
$S$ stands for the standard antipode in $H$.
It is easy to check that $T_i a\in \mathcal F(A)$,
$(a_{(f)}b)\in \mathcal F(A)$ for every
$a,b \in \mathcal F(A)$.

Identify the space of polynomials $H$ with a subalgebra of $H^*$
(getting product in $H^*$ as dual to the coproduct in $H$) in the
natural way: $T_i\mapsto T_i^*$, where $\langle T_i^*, T_j\rangle
=\delta_{i,j}$. Then $\mathcal F(A)$ turns into an $H$-module
endowed with bilinear operations indexed by $H^*$. It is
straightforward to show that (H0) and (H2) hold (cf. \cite{BDK}).
However, (H1) does not hold in general. But if $C$ is an
$H$-submodule of $\mathcal F(A)$ that consists of elements
satisfying (H1), and $C$ is closed under all operations
$(\cdot\oo{f}\cdot)$, $f\in H$, then $C$ is a conformal algebra.

One may interpret Theorem~\ref{thm:var} as follows.

\begin{thm}
Suppose a TC-algebra $A$ belongs to a variety $\mathcal V$.
Then any conformal algebra $C\subseteq \mathcal F(A)$
is a $\mathcal V$-algebra in $\mathcal M^*(H)$.
\end{thm}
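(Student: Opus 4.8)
The plan is to deduce the statement from Theorem~\ref{thm:var} by passing to the coefficient algebra. First note that $C$, being an $H$-submodule of $\mathcal F(A)$ consisting of elements satisfying (H1) and closed under all $(\cdot\oo f\cdot)$, $f\in H$, is a genuine conformal algebra (an $H$-pseudo-algebra), as was observed above. Hence, by the form of Theorem~\ref{thm:var} appropriate for $H=\Bbbk[T_1,\dots,T_n]$, it is enough to show that $\Coeff C$ is a $\mathcal V$-algebra; and for this it suffices to treat its annihilation part $\Coeff^-C$ (spanned by the ``non-negative modes''), which satisfies the same identities.

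The next step is to write down a homomorphism $\psi\colon \Coeff^-C\to A$. Recall that $\Coeff^-C$ is spanned by modes $a_{(\alpha)}$, $a\in C$, $\alpha\in\mathbb Z_+^n$, subject to $(T_ia)_{(\alpha)}=-\alpha_i a_{(\alpha-e_i)}$ and to a product dictated by the conformal operations of $C$. Put $\psi(a_{(\alpha)})=a(T^\alpha)\in A$. Compatibility with the $T$-relations is immediate from the defining relation $(T_ia)(f)=-\partial_i\bigl(a(f)\bigr)$ and $T$-invariance of $a$. Multiplicativity is a short calculation: one expands a product of modes in $\Coeff^-C$, rewrites each operation by \eqref{eq:f-product}, and collapses the resulting coproduct/antipode sum by an inclusion–exclusion identity in $H$, arriving exactly at $a(T^\alpha)\,b(T^\beta)$. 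Thus $\psi$ is a well-defined algebra homomorphism whose image is $\mathcal A(C)=\{\,a(f)\mid a\in C,\ f\in H\,\}\subseteq A$.

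It remains to prove that $\psi$ is injective; then $\Coeff^-C\cong\mathcal A(C)$ is a subalgebra of $A\in\mathcal V$, so $\Coeff^-C\in\mathcal V$, hence $\Coeff C\in\mathcal V$, and Theorem~\ref{thm:var} finishes the proof. For injectivity, observe that $\psi$ intertwines the derivations of $\Coeff^-C$ implementing the $T_i$-action (those with $a_{(\alpha)}\mapsto -\alpha_i a_{(\alpha-e_i)}$, which are locally nilpotent) with the derivations $\partial_i$ of $A$, up to sign; consequently $\ker\psi$ is an ideal of $\Coeff^-C$ stable under these derivations. By the standard correspondence between such $T$-invariant ideals of the coefficient algebra and conformal ideals of $C$, $\ker\psi$ is the coefficient algebra of a conformal ideal $I\trianglelefteq C$. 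But then $\mathcal A(I)=\psi(\Coeff^-I)=0$; since the elements of $I\subseteq\mathcal F(A)$ are actual maps $H\to A$, $\mathcal A(I)=0$ forces $I=0$, and therefore $\ker\psi=0$.

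The step I expect to be the main obstacle is precisely this last one: establishing that $\ker\psi$ genuinely comes from a conformal ideal, i.e. ruling out ``mixed-mode'' elements of $\ker\psi$ not detected on any single conformal ideal of $C$. This is exactly where the local nilpotence of the $T_i$-action on $\Coeff^-C$ (a consequence of relations of type (C2)) together with the locality axiom (H1) must be used. A route that avoids this delicacy is to realize $C$ directly inside $A[[z_1^{\pm1},\dots,z_n^{\pm1}]]$ as mutually local formal distributions via $a\mapsto \tilde a(z)=\sum_{\alpha\in\mathbb Z_+^n}a(T^\alpha)\,z_1^{-\alpha_1-1}\cdots z_n^{-\alpha_n-1}$ — pairwise locality coming from (H1), and compatibility of $T_i$ with $\partial_{z_i}$ and of the operations with the residue formula coming from \eqref{eq:f-product} — and then to invoke the classical fact (associative and Lie cases, and its extension in the framework of \cite{Ro1,Ko}) that mutually local distributions over an algebra lying in $\mathcal V$ span a $\mathcal V$-conformal algebra; $C$, being a conformal subalgebra of such, is then a $\mathcal V$-algebra in $\mathcal M^*(H)$.
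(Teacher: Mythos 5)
Your main route has a genuine gap at the step you yourself single out: the injectivity of $\psi\colon\Coeff^-C\to A$. The argument you give for it --- that $\ker\psi$, being a $T$-stable ideal of $\Coeff^-C$, must be the coefficient algebra of a conformal ideal of $C$ --- relies on a correspondence that only goes one way. A conformal ideal $I\trianglelefteq C$ does give a $T$-stable ideal of the coefficient algebra, but not every $T$-stable ideal arises this way: already for $C=\Bbbk[T]v$ with zero multiplication, $\Coeff^-C=\bigoplus_{k\ge0}\Bbbk v_{(k)}$ has zero product and $T$ acts by $v_{(k)}\mapsto -kv_{(k-1)}$, so $\Bbbk v_{(0)}$ is a $T$-stable ideal, while the image in $\Coeff^-C$ of any nonzero conformal ideal $f(T)\Bbbk[T]v$ is all of $\Coeff^-C$. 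Worse, injectivity itself is doubtful: for $C$ free over $H$ one has $a_{(0)}=0$ in $\Coeff^-C$ iff $a\in\sum_iT_iC$, whereas $\psi(a_{(0)})=a(1)=0$ only forces $a$ into $\sum_iT_i\mathcal F(A)$; since $C$ may be a proper subalgebra of $\mathcal F(A)$, these conditions need not coincide and $\ker\psi$ can be nonzero. Your deduction ``$\Coeff^-C\cong\mathcal A(C)\subseteq A$, hence $\Coeff^-C\in\mathcal V$'' needs injectivity (a surjection onto a subalgebra of $A$ gives nothing about identities of the source), so the argument does not close. Two further steps are asserted without proof: the reduction from $\Coeff C$ to $\Coeff^-C$ (``satisfies the same identities''), and, more seriously, the $n$-variable form of Theorem~\ref{thm:var} --- the paper states that theorem only for $H=\Bbbk[T]$, and its multi-dimensional extension is essentially the content of the very statement you are proving.

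The paper's own proof is one line: repeat the computations of \cite{Ko} in the multi-dimensional case. Concretely, one verifies directly from \eqref{eq:f-product} that every composition of the operations $(\cdot\oo{f}\cdot)$ on $C$ evaluates, at each $g\in H$, to a sum of products of elements $a(h)\in A$ arranged exactly as in the corresponding composition in $A$, so each polylinear identity of $A$ forces the corresponding conformal identity of $C$; no coefficient algebra and no injectivity statement are needed. Your closing ``formal distribution'' alternative is essentially this argument in disguise and is the route to pursue; the coefficient-algebra route should be abandoned or the injectivity of $\psi$ established by a genuinely different argument.
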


\begin{proof}
It is sufficient to perform the same computations as in
\cite{Ko} for multi-dimen\-si\-onal case.
\end{proof}

There is
a sufficient condition for entire $\mathcal F(A)$ to satisfy (H1),
i.e., to be a conformal algebra.
Now $A$ is supposed to be associative, Lie, or Jordan.

Let $M$ be an $H$-module,
$E=\End_{\Bbbk} M$. One may consider $E$ as a topological algebra
with respect to finite topology \cite{J}, i.e., such that
basis of neighborhoods of zero is given by subspaces
\[
U_{u_1,\dots,u_N} = \{\psi \in E\mid \psi u_j =0,\, j=1,\dots,N\},
\quad u_1,\dots, u_N\in M.
\]
Then
$\partial _i  = [\cdot, T_i]\in \Derr E$,
$i=1,\dots, n$, are continuous derivations.

\begin{defn}\label{defn:TC-rep}
A TC-representation of a TC-algebra $A$
is a continuous $\partial_i$-invariant
representation $\rho : A \to \End_{\Bbbk} M$.
In this case, $M$ is said to be a TC-module over~$A$.
\end{defn}

For example, the space $H$ itself with respect to the
canonical action of the Weyl algebra
can be considered as a TC-module
over $\mathbb A_n$.

\begin{prop}[cf. \cite{BDK}]\label{prop:conf}
Let $A$ be a TC-algebra.
If $A$ has a faithful TC-represen\-ta\-tion $\rho $ on a
finitely generated $H$-module $M$ then $\mathcal F(A)$
is a conformal algebra.
\end{prop}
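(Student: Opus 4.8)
The plan is to verify the locality axiom (H1) for every $a,b\in\mathcal F(A)$ by transferring the computation to the endomorphism algebra $E=\End_\Bbbk M$ via the faithful representation $\rho$. First I would observe that a faithful continuous $\partial_i$-invariant representation $\rho:A\to E$ induces an injective $H$-module map $\rho_*:\mathcal F(A)\to\mathcal F(E)$ by $\rho_*(a)(f)=\rho(a(f))$; continuity and T-invariance of $\rho$ guarantee $\rho_*(a)\in\mathcal F(E)$, and $\rho_*$ intertwines the operations $(\cdot\oo{f}\cdot)$ defined by \eqref{eq:f-product}, because $\rho$ is an algebra homomorphism and the formula for $(a_{(f)}b)(g)$ only involves the algebra product and the coproduct of $H$. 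Hence it suffices to prove (H1) holds in $\mathcal F(E)$ for elements of the form $\rho_*(a)$, $\rho_*(b)$, or even for all of $\mathcal F(E)$, and then restrict.

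The key point is finiteness of $M$ as an $H$-module. Fix generators $u_1,\dots,u_N$ of $M$ over $H$. Given $a,b\in\mathcal F(A)$, I want to show that $\{x\in X=H^*\mid (a\oo{x}b)=0\}$ has finite codimension, equivalently that $(a\oo{f}b)=0$ for all $f$ in some finite-codimensional subspace of $H$ (after the identification of $H$ with a dense subalgebra of $H^*$). Because each $\partial_i$ acting on $A$ — equivalently $[\cdot,T_i]$ acting on $E$ — is locally nilpotent on the relevant finite-dimensional data, and because $a(f)$ for $f$ running over $H$ lands, after applying to the finitely many generators $u_j$, in a finitely generated $H$-submodule of $M$, the elements $\rho_*(a)$ and $\rho_*(b)$ are determined by finitely much information. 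Concretely: $\rho(a(f))u_j$ depends polynomially on $f$ but, modulo a high enough power of the augmentation ideal, the image $\rho(a(T^\alpha))u_j$ can be written in terms of $u_1,\dots,u_N$ with coefficients that are, up to the T-invariance relation, governed by finitely many parameters. One then estimates: for $f$ lying in a sufficiently high power $(T_1,\dots,T_n)^K$ of the augmentation ideal, $S(f_{(2)})g$ is pushed deep into the filtration, $b(S(f_{(2)})g)$ becomes a combination of generators with coefficients in high powers, and continuity of $\rho(a(f_{(1)}))$ forces the product $\rho(a(f_{(1)}))\rho(b(S(f_{(2)})g))u_j=0$. Thus $(a\oo{f}b)=0$ for all $f\in(T_1,\dots,T_n)^K$, a finite-codimensional subspace, which is exactly (H1).

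Having established (H1), the remaining axioms are free: (H0) is the bilinearity of $(\cdot\oo{f}\cdot)$ over the $f$-variable, which is immediate from \eqref{eq:f-product}, and (H2) (sesqui-linearity) was already noted in the text to hold for all of $\mathcal F(A)$. Since $\mathcal F(A)$ is an $H$-module closed under all operations $(\cdot\oo{f}\cdot)$ and now satisfies (H1), it is a conformal algebra by the definition recalled before Theorem~\ref{thm:var}, completing the proof.

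The main obstacle I expect is making the filtration estimate in the second paragraph genuinely uniform: one must control, simultaneously for all $f$ of high augmentation degree and for each of the finitely many generators $u_j$, the interaction between the coproduct split $f\mapsto f_{(1)}\otimes f_{(2)}$ and the $q$-adic (or finite-topology) filtrations on $A$ and $M$. This is essentially the multidimensional version of the corresponding argument in \cite{BDK}; the cocommutativity of $H$ and the fact that $S$ preserves the augmentation filtration are what make the bookkeeping go through, but writing the bound explicitly in several variables $T_1,\dots,T_n$ is where the real work lies.
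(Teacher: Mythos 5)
Your overall strategy --- transferring everything through $\rho$ to $\End_{\Bbbk} M$ with its finite topology and verifying (H1) by pushing $f$ into a high power of the augmentation ideal $I=(T_1,\dots,T_n)$ --- is the same as the paper's, but the step you yourself flag as ``the main obstacle'' is precisely the content of the proof, and your sketch of it does not work as written. The coproduct does not push both legs deep into the filtration: for $f\in I^K$ one only has $\Delta(f)\in\sum_{k=0}^{K} I^k\otimes I^{K-k}$ (for instance $\Delta(T^K)$ contains the terms $T^K\otimes 1$ and $1\otimes T^K$), so for the summands where $f_{(2)}$ has low degree the element $S(f_{(2)})g$ is \emph{not} deep in the filtration, and your claim that ``$b(S(f_{(2)})g)$ becomes a combination of generators with coefficients in high powers'' fails for exactly those terms. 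The paper's argument is a pigeonhole over the split $k+(K-k)=K$: continuity of $\rho\circ b$ gives $s$ with $\rho(b(I^{s}))u_i=0$ and shows that $V=\sum_i\rho(b(H))u_i$ is finite-dimensional; continuity of $\rho\circ a$ then gives $m$ with $\rho(a(I^{m}))V=0$; and for $K>s+m$ every summand $\rho(a(I^k)b(I^{K-k}))u_i$ vanishes because either $K-k\ge s$ or $k\ge m$. The auxiliary finite-dimensional space $V$ is the missing ingredient --- ``continuity of $\rho(a(f_{(1)}))$'' as a single operator forces nothing by itself.

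Two further points are omitted. First, $A$ is allowed to be Lie or Jordan, so the product $a(f_{(1)})\,b(S(f_{(2)})g)$ in $A$ is represented in $\End_{\Bbbk}M$ by a commutator or anticommutator; one must therefore also arrange $\rho(b(I^{K-k})a(I^{k}))u_i=0$ via a second constant $K_2$, not only the single ordering you treat. Second, the estimate only shows that $\rho((a\oo{f}b)(g))$ annihilates the generators $u_1,\dots,u_N$; to conclude that it vanishes on all of $M=\sum_i Hu_i$ one must use the $\partial_i$-invariance of $\rho$ together with the T-invariance of $a\oo{f}b$ to commute $\rho((a\oo{f}b)(g))$ past the $H$-action on $M$, and only then does faithfulness of $\rho$ give $(a\oo{f}b)=0$ in $A$. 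With these three repairs your argument coincides with the paper's proof.
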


\begin{proof}
Suppose $M$ is generated over $H$ by elements
$u_1,\dots, u_N \in M$.
It follows from the definition of finite topology
that
$\dim \rho(a(H))u_i<\infty$ for every $a\in \mathcal F(A)$,
$i=1,\dots, n$. Moreover, there exists
$s\ge 1$ such that $\rho(a(I^s))u_i =0$ for all~$i$,
where $I$ is the augmentation ideal of~$H$.

Relation \eqref{eq:f-product} implies that for every
$a,b\in \mathcal F(A)$, $f\in I^K$, $K\ge 1$,
we have
\[
(a\oo{f}b)(g) \in \sum\limits_{k=0}^K a(I^k)b(I^{K-k}),
\quad g\in H,
\]
where $I^0=\Bbbk $.

Let $s$ stands for a number such that
$\rho(b(I^{s}))u_i=0$ for all $i=1,\dots, n$.
Denote $V=\sum_{i=1}^n \rho(b(H))u_i \subset M$.
Since $\dim V<\infty$, there exists
$m\ge 1$ such that
$\rho(a(I^{m}))V =0$.
If $K_1>s+m$ then $\rho(a(I^k)b(I^{K_1-k}))u_i=0$
for all $k=0,\dots, K_1$, $i=1,\dots, n$.
In the same way, we can find $K_2$ such that
$\rho(b(I^{K_2-k})a(I^{k}))u_i=0$
for all $k=0,\dots, K_2$, $i=1,\dots, n$.
Since $\rho $ is a representation of $A$
(recall that $A$ is either associative, or Lie, or Jordan),
we obtain $\rho((a\oo{f} b)(g))u_i = 0$ for all $f\in I^K$,
$K\ge \max\{K_1,K_2\}$, $g\in H$.
It remains to note that $\partial_i$-invariance of $\rho $
implies $(a\oo{f} b)=0$.
\end{proof}

\begin{cor}[cf. \cite{K2}]
The following objects are associative conformal algebras:
\begin{itemize}
\item $\mathcal F(\mathbb M_N(H))=\Curr^n_N$ (current conformal algebra
over $\mathbb M_N(\Bbbk )$);
\item $\mathcal F(\mathbb A_n) =\Cend^n_1$ (conformal Weyl algebra);
\item $\mathcal F(\mathbb M_N(\mathbb A_n))=\Cend^n_N$
 (algebra of conformal endomorphisms of a free $N$-generated $H$-module);
\item $\mathcal F(\mathbb A_{n,N,Q}) = \Cend^n_{N,Q}$,
where $\mathbb A_{n,N,Q}=\mathbb M_N(\mathbb A_n)Q(p_1,\dots, p_n)$,
 $Q$ is a matrix over $\Bbbk [p_1,\dots, p_n]$.
\end{itemize}
\end{cor}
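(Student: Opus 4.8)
The plan is, for each of the four algebras $A$, to present $A$ as an associative TC-algebra admitting a faithful TC-representation on a \emph{finitely generated} $H$-module. Granting this, Proposition~\ref{prop:conf} gives that $\mathcal F(A)$ is a conformal algebra, and the Theorem above (that every conformal subalgebra of $\mathcal F(A)$ is a $\mathcal V$-algebra in $\mathcal M^*(H)$ whenever $A\in\mathcal V$), applied with $\mathcal V$ the variety of associative algebras, gives that $\mathcal F(A)$ is an associative conformal algebra. Associativity of all four algebras is immediate, $\mathbb A_{n,N,Q}$ being a subalgebra of the associative algebra $\mathbb M_N(\mathbb A_n)$, and this is precisely what both Proposition~\ref{prop:conf} and that Theorem require of $A$. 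So the content is in producing the TC-structure and the representation.

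The representing module will in every case be the free $H$-module $H^{N}$, with $T_i$ acting by multiplication, so that it is generated over $H$ by the $N$ coordinate vectors; the subtlety is how the algebra acts on it. For $\mathbb A_n$ I would use the canonical action of the Weyl algebra on $H$ mentioned after Definition~\ref{defn:TC-rep}: send $p_i$ to multiplication by $T_i$ and $q_i$ to $\partial/\partial T_i$. This is faithful and respects the Weyl relations, and it is a TC-representation because the TC-derivation $[\cdot,p_i]$ of $\mathbb A_n$ is carried by $\rho$ to $[\cdot,\rho(p_i)]$, which is exactly the derivation of $\End_\Bbbk H$ attached to multiplication by $T_i$; continuity in the finite topology is automatic, since $\rho(Q_m)$ annihilates every polynomial of degree less than $m$. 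Letting each matrix entry act on its column makes $H^{N}$ a faithful TC-module over $\mathbb M_N(\mathbb A_n)$, hence over $\mathbb A_n$ for $N=1$. To treat $\mathbb M_N(H)$ I would identify $H$ with the subalgebra $\Bbbk[q_1,\dots,q_n]$ of $\mathbb A_n$ via $T_i\mapsto q_i$; this identification is $\partial_i$-equivariant, because $[\cdot,p_i]$ restricts on $\Bbbk[q_1,\dots,q_n]$ to $\partial/\partial q_i$, so $\mathbb M_N(H)$ embeds $\partial_i$-equivariantly into $\mathbb M_N(\mathbb A_n)$ and inherits a faithful TC-representation on $H^{N}$. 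Finally $\mathbb A_{n,N,Q}=\mathbb M_N(\mathbb A_n)Q(p_1,\dots,p_n)$ is a right ideal of $\mathbb M_N(\mathbb A_n)$, the entries of $Q$ lying in $\Bbbk[p_1,\dots,p_n]$, so restricting the representation above to it keeps it faithful and leaves the module finitely generated.

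It then remains to confirm that each algebra is a TC-algebra. For $\mathbb A_n$ this is Example~\ref{exmp:Weyl}(i); for $\mathbb M_N(\mathbb A_n)$ and $\mathbb M_N(H)$ it follows from the fact established above that $\mathbb M_N(A)$ is a TC-algebra whenever $A$ is, applied to $A=\mathbb A_n$ and to $A=H$ (which is a TC-algebra with $\idd_H\in\mathcal F(H)$). For $\mathbb A_{n,N,Q}$ one checks $\partial_i$-invariance from $[xQ,p_i]=[x,p_i]Q$ (since $[Q,p_i]=0$), and the identity $\mathbb A_{n,N,Q}=\mathcal A(\mathcal F(\mathbb A_{n,N,Q}))$ by writing an arbitrary element of $\mathbb M_N(\mathbb A_n)$ as $\tilde b(T^\alpha)$ with $\tilde b\in\mathbb M_N(\mathcal F(\mathbb A_n))$ and observing that $f\mapsto\tilde b(f)Q$ is a continuous, T-invariant, $\mathbb A_{n,N,Q}$-valued map. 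Then Proposition~\ref{prop:conf} and the Theorem above apply to each of the four algebras, which yields the Corollary. The main obstacle, and the only step that is not bookkeeping, is the choice of module structure in the second paragraph: the evident action of, say, $\mathbb M_N(H)$ on $H^{N}$ by matrix multiplication is $\partial_i$-invariant only with a vanishing correction and so cannot be faithful, forcing one to route the representation through the Weyl algebra and let the generator $p_i$ of the TC-derivation act by multiplication — which is what keeps the representing $H$-module \emph{free}, hence finitely generated, rather than torsion, while the algebra itself then acts by differential operators. Everything else — continuity in the finite topology, $\partial_i$-invariance checked on generators, the identity $A=\mathcal A(\mathcal F(A))$ for $\mathbb A_{n,N,Q}$ — is routine.
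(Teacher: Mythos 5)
Your proposal is correct and is essentially the route the paper intends: the Corollary is a direct application of Proposition~\ref{prop:conf} (together with the Theorem on varieties for associativity), and the paper itself points to the canonical action of $\mathbb A_n$ on $H$ as the relevant faithful TC-representation on a finitely generated $H$-module, which you extend to $H\otimes\Bbbk^N$ for the matrix and ideal cases and pull back along $T_i\mapsto q_i$ for $\mathbb M_N(H)$. The only slip is immaterial: $\mathbb M_N(\mathbb A_n)Q(p_1,\dots,p_n)$ is a \emph{left} ideal, not a right ideal, but your argument uses only that it is a $\partial_i$-invariant subalgebra.
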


The last conformal algebra appears from TC-subalgebra
$\mathbb A_{n,N,Q} $
of the matrix Weyl algebra. If $\det Q\ne 0$ then $\Cend_{N,Q}^n$
is simple \cite{K2}.

By abuse of terminology, let us call a TC-algebra $A$
satisfying the condition of Proposition~\ref{prop:conf}
by a TC-algebra with finite faithful representation.
If, in addition, $\mathcal F(A)$ is a finitely generated $H$-module
then $A$ is said to be finite TC-algebra. A~conformal algebra
which is finitely generated over $H$ is also called finite.

The structure of finite simple and semisimple
Lie conformal algebras was completely
described in \cite{DK} ($n=1$) and \cite{BDK} ($n\ge 1$).
Finite Jordan conformal algebras were considered in
\cite{Z1} ($n=1$), where simple and semisimple algebras were described.
Our aim is to present a similar result for the general class
of not necessarily finite associative
TC-algebras with finite faithful representation.
The complete solution is known for $n=1$
(for $n=0$ this is the classical Wedderburn Theorem).
Throughout the rest of the paper we consider associative algebras
only.

\begin{prop}\label{prop:semisimple}
Let $A$ be a semiprime (i.e., without nonzero nilpotent ideals)
TC-algebra with finite faithful representation.
Then $\mathcal F(A)$ is a semiprime conformal algebra.
\end{prop}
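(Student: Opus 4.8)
The plan is to argue that a nonzero nilpotent ideal of the conformal algebra $\mathcal F(A)$ would force the existence of a nonzero nilpotent $\partial_i$-invariant ideal of $A$, contradicting semiprimeness of $A$. The natural bridge between the two worlds is the operator $\mathcal A(\cdot)$: given an $H$-submodule $J$ of $\mathcal F(A)$, the set $\mathcal A(J)=\{a(f)\mid a\in J,\ f\in H\}$ (or rather the ideal of $A$ it generates) is $\partial_i$-invariant because $J$ is $T$-stable, and its multiplicative behaviour is controlled by the $(\cdot\oo{f}\cdot)$ operations via $\eqref{eq:f-product}$. So the first step is to establish, for an arbitrary $H$-submodule $J\subseteq\mathcal F(A)$ closed under all $(\cdot\oo{f}\cdot)$, the identity $\mathcal A(J)\cdot\mathcal A(J)\subseteq \mathcal A(J\cdot_{(\bullet)}J)+\ldots$, i.e.\ that products of the form $a(f)b(g)$ in $A$ expand in terms of $(a\oo{h}b)(k)$. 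Concretely, from $\eqref{eq:f-product}$ one has $a(f)b(g)$ expressible through $(a\oo{h}b)$-values by the standard inversion used already in the proof of Proposition~\ref{prop:conf}; I would record this as a lemma and then note that the (two-sided, $\partial_i$-invariant) ideal of $A$ generated by $\mathcal A(J)$ equals the sum of all iterated products and is again of the form $\mathcal A$ of a conformal ideal.

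The second step is the correspondence between ideals. If $J$ is an ideal of the conformal algebra $\mathcal F(A)$ — that is, an $H$-submodule with $\mathcal F(A)\oo{f}J\subseteq J$ and $J\oo{f}\mathcal F(A)\subseteq J$ for all $f$ — then I claim $\mathcal A(J)$ generates a $\partial_i$-invariant ideal $\widetilde J$ of $A$ with $\mathcal F(\widetilde J)\supseteq J$, and conversely $\widetilde J$ is recovered from $\mathcal F(A)$-data. The key point, again drawn from $\eqref{eq:f-product}$, is that multiplying a value $b(g)\in\mathcal A(J)$ on the left by an arbitrary element $a(f)\in A=\mathcal A(\mathcal F(A))$ lands in $\mathcal A(J)$ precisely because $a\oo{h}b\in J$; here we use that $A$ is a TC-algebra, so every element of $A$ is such a value. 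Thus conformal ideals of $\mathcal F(A)$ and $\partial_i$-invariant ideals of $A$ are related by order-preserving maps $J\mapsto\widetilde J$ and $\mathcal I\mapsto\mathcal F(\mathcal I)$.

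The third step is to transport nilpotence. Suppose $J\subseteq\mathcal F(A)$ is a nonzero conformal ideal with $J^{\,m}=0$ in the sense that all $m$-fold iterated $(\cdot\oo{f}\cdot)$-products of elements of $J$ vanish. Using the expansion from Step 1, any product $a_1(f_1)\cdots a_m(f_m)$ with $a_i\in J$, $f_i\in H$, is a $\Bbbk$-linear combination of values $(((a_1\oo{h_1}a_2)\oo{h_2}a_3)\cdots)\oo{h_{m-1}}a_m)(k)$ of $m$-fold conformal products, hence is zero. Therefore the associative ideal $\widetilde J$ generated by $\mathcal A(J)$ satisfies $\widetilde J^{\,m}=0$, while $\widetilde J\neq 0$ because $J\neq 0$ forces some value $a(f)\neq 0$. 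This contradicts semiprimeness of $A$, so $\mathcal F(A)$ has no nonzero nilpotent conformal ideal, i.e.\ it is semiprime.

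The main obstacle I anticipate is Step 1: making precise the claim that a product $a(f)b(g)$ in $A$ lies in the span of values of conformal products $(a\oo{h}b)(k)$, and that iterating this is compatible with associativity so that an $m$-fold $A$-product reduces to $m$-fold conformal products. This requires inverting $\eqref{eq:f-product}$ — using that $H=\Bbbk[T_1,\dots,T_n]$ has an antipode and that the pairing $H\hookrightarrow H^*$ identifies the relevant coefficients — exactly the kind of computation performed in Proposition~\ref{prop:conf} and in \cite{Ko}; one must check it survives passage to higher products and to the topological closure implicit in $\mathcal F(A)$. A secondary subtlety is that $\widetilde J$ as an \emph{ideal} of $A$ (not just the submodule $\mathcal A(J)$) remains nilpotent of the same class, which is immediate once Step 1 gives the expansion on both sides; and that $\widetilde J$ is genuinely $\partial_i$-invariant, which follows from $T$-invariance of $J$ together with $\eqref{T-inv}$.
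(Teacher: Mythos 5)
Your proposal is correct and follows essentially the same route as the paper: the paper's entire proof consists of the identity $a(f)b(g) = (a\oo{f_{(1)}}b)(f_{(2)}g)$ (the inversion of \eqref{eq:f-product} obtained from the antipode axiom $S(f_{(1)})f_{(2)}=\varepsilon(f)1$), from which it concludes that $\mathcal A(J)=\{a(f)\mid a\in J,\ f\in H\}$ is a nonzero abelian ideal of $A$ whenever $J$ is a nonzero abelian ideal of $\mathcal F(A)$. The ``main obstacle'' you flag in Step~1 is exactly this one-line Hopf computation, and your observation that $\mathcal A(J)$ is already a two-sided ideal (no need to pass to the generated ideal) because $A=\mathcal A(\mathcal F(A))$ is the same simplification the paper relies on.
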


\begin{proof}
Relation
\eqref{eq:f-product} implies that
\[
a(f)b(g) = (a\oo{f_{(1)}}b)(f_{(2)}g), \quad a,b\in \mathcal F(A),
\ f,g\in H.
\]
Therefore, if $\mathcal F(A)$ has a nonzero abelian ideal $J$
then
$\{a(f)\mid f\in H,a\in J\}$
is a nonzero abelian ideal of~$A$.
\end{proof}

\begin{thm}\label{thm:irred}
Let $A$ be a TC-subalgebra of $\mathbb M_N(\mathbb A_n)$.
If the subalgebra $A_1 = \Bbbk[p_1,\dots, p_n]A$ acts
irreducibly on $M=H\otimes \Bbbk^N$ then
$A_1$ is a left ideal in $\mathbb M_N(\mathbb A_n)$
which is dense with respect to the $q$-adic topology.
\end{thm}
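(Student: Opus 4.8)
The plan is to exploit the fact that $M = H \otimes \Bbbk^N \cong \mathbb M_N(\mathbb A_n)/Q_1'$ for an appropriate maximal left submodule, i.e.\ to realise the irreducible action of $A_1$ concretely inside the regular left action of $\mathbb M_N(\mathbb A_n)$ on $M$. First I would observe that $A_1 = \Bbbk[p_1,\dots,p_n]A$ is genuinely a (left $\mathbb M_N(\mathbb A_n)$-) submodule only after we prove it, so the first step is to set up the ambient picture: $\mathbb M_N(\mathbb A_n)$ acts on $M$ faithfully by the canonical action, and $A_1 \subseteq \mathbb M_N(\mathbb A_n)$ is a $\Bbbk[p_1,\dots,p_n]$-submodule that acts irreducibly on $M$. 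Since $M$ is a cyclic $\mathbb M_N(\mathbb A_n)$-module (generated by $1\otimes e_1,\dots,1\otimes e_N$, or even by a single element using the $q$'s), and $A_1$ already acts irreducibly, the image $A_1 M$ is all of $M$; combined with irreducibility, for a fixed nonzero $u_0 \in M$ the map $x \mapsto x u_0$ sends $A_1$ onto $M$.

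The key step is a density/Jacobson-type argument. Let $L = \{ y \in \mathbb M_N(\mathbb A_n) \mid y u_0 = 0\}$, a maximal left ideal, so $M \cong \mathbb M_N(\mathbb A_n)/L$ as left modules. The evaluation map gives $A_1 + L = \mathbb M_N(\mathbb A_n)$ as sets. I then want to upgrade ``$A_1$ hits every coset of $L$'' to ``$A_1$ is itself a left ideal''. For this I use that $A_1$ is $\Bbbk[p_1,\dots,p_n]$-stable and $A$-stable, so $A_1$ is stable under left multiplication by the subalgebra generated by $\Bbbk[p_1,\dots,p_n]$ and $A$; the point is that irreducibility of the $A_1$-action forces this generated subalgebra, after completion in the $q$-adic topology, to be everything. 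Concretely: $\mathbb M_N(\mathbb A_n)$ is $q$-adically complete-ish (it is the ring of $N\times N$ matrices of differential operators, and $Q_k \to 0$), and $\mathbb M_N(\mathbb A_n)/Q_k$ is a finite-dimensional algebra acting on the finite-dimensional space $M/Q_k M$. Since $A_1$ acts irreducibly on $M$, its image $\bar A_1$ in $\End_\Bbbk(M/Q_k M)$ acts irreducibly (or at least its saturation does), and by the classical Jacobson density theorem $\bar A_1$, being closed under the relevant multiplications, surjects onto a large subalgebra of $\End_\Bbbk(M/Q_k M)$; pulling back, $A_1 + Q_k$ contains a left ideal of finite codimension. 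Passing to the limit over $k$ yields that $A_1$ is $q$-adically dense in a left ideal, and then that the left ideal $\mathbb M_N(\mathbb A_n) A_1$ coincides with $A_1$ up to $q$-adic closure — giving exactly the statement.

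The main obstacle I anticipate is making the density argument precise in the presence of the $q$-adic topology: one must check that ``$A_1$ acts irreducibly on $M = H\otimes\Bbbk^N$'' (a statement about an infinite-dimensional module, cyclic but not of finite length) actually transfers to irreducibility of the induced actions on the finite-dimensional quotients $M/Q_k M$, and that the density conclusions at each finite level are compatible and pass cleanly to the inverse limit. A secondary subtlety is verifying that the subalgebra generated by $\Bbbk[p_1,\dots,p_n]$ together with $A$ is $q$-adically dense in $\mathbb M_N(\mathbb A_n)$ — here one uses that $A$, being a TC-algebra, is $T$-stable hence closed under the $\partial_i = [\,\cdot\,,p_i]$ and carries enough $q_i$-translates, so that brackets with the $p_i$ generate commutators that build up all of $\mathbb A_n$ modulo each $Q_k$. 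Once both points are in place, the conclusion that $A_1$ is a dense left ideal follows formally from Jacobson density applied level-by-level.
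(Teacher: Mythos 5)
There are genuine gaps here, both in the density step and in the final upgrade to the left-ideal property. Your plan to reduce to the finite-dimensional quotients $M/Q_kM$ does not get off the ground: $q_i$ acts on $M=H\otimes\Bbbk^N$ as $\partial/\partial T_i$, which is surjective on polynomials, so $Q_kM=M$ and $M/Q_kM=0$; moreover $Q_k$ is only a \emph{left} ideal, so $\mathbb M_N(\mathbb A_n)/Q_k$ is not an algebra, and it is in any case infinite-dimensional over $\Bbbk$ (it is a free $\Bbbk[p_1,\dots,p_n]$-module of finite rank). So there is no ``level-by-level'' classical density argument to pass to the limit with. The correct route, which the paper takes, is to apply the Jacobson density theorem directly to the irreducible (infinite-dimensional) module $M$; this requires computing the centralizer $\mathcal D=\End_{A_1}M$, which your proposal never addresses. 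The paper's key computation is that for $\varphi\in\mathcal D$ and $f\in\Bbbk[p_1,\dots,p_n]$ one has $[\varphi,f]\,a=0$ for all $a\in A$ (because $fa\in A_1$) while $[\varphi,f]\in\mathcal D$; since $\mathcal D$ is a division algebra this forces $[\varphi,f]=0$, hence $\mathcal D\subseteq\mathbb M_N(H)$ and therefore $\mathcal D=\Bbbk$. Density of $A_1$ in the finite topology (which is then identified with the $q$-adic topology) follows.

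The second, independent gap is that your conclusion is strictly weaker than the statement: you end with ``$A_1$ is $q$-adically dense in a left ideal'' or ``coincides with $\mathbb M_N(\mathbb A_n)A_1$ up to closure,'' whereas the theorem asserts that $A_1$ \emph{is} a left ideal. Density alone does not give this, and no purely topological argument will, since $A_1$ need not be closed. The paper closes this gap by exploiting the conformal structure: it passes to $\mathcal F(A_1)\supseteq\mathcal F(A)$, uses the left action $a(f)\cdot b=(a\oo{f}b)$ together with the density of $A_1$ to show that $\mathcal F(A_1)$ is a left ideal of the conformal algebra $\Cend^n_N=\mathcal F(\mathbb M_N(\mathbb A_n))$, and then translates back through the identity $a(f)b(g)=(a\oo{f_{(1)}}b)(f_{(2)}g)$ of~\eqref{eq:f-product} to conclude that $A_1$ itself absorbs left multiplication by all of $\mathbb M_N(\mathbb A_n)$. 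This use of T-invariance is the essential idea missing from your proposal.
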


\begin{proof}
Let us first note that $A_1$ is a subalgebra.
Indeed, $ap_i = p_i a + \partial _i (a)$, $a\in A$, $i=1,\dots, n$.
Hence, $A_1A_1\subseteq A_1$.

The centralizer $\mathcal D = \End_{A_1} M$ of $A_1$ in $\End M$
is a division algebra. Consider
$0\ne \varphi \in \mathcal D$, $0\ne a\in A$, $f\in H$.
Then $fa = f(p_1,\dots, p_n)a \in A_1$, and we have
\[
[\varphi, f]a = \varphi (fa) - f(\varphi a) = 0.
\]
Note that
$[\varphi, f] \in \mathcal D$ since for every $b\in A_1$
the commutator $[f,b]$ belongs to $A_1$.
Therefore, for every $f\in H$, $\varphi \in \mathcal D$
we have $\varphi f = f\varphi$, so $\mathcal D \subseteq \mathbb M_N(H)$.
Identity endomorphism of $\End M$ belongs to $\mathcal D$.
But it is clear that $\mathbb M_N(H)$ contains no division
algebra with the same identity except for $\mathcal D =\Bbbk $.
Hence, $A_1$ is a dense subalgebra of
$\mathbb M_N(\mathbb A_n)\subset \End M$
with respect to the finite topology on $\End M$ (over $\Bbbk $).
This topology is actually equivalent to the $q$-adic topology.

Now, consider $\mathcal F(A_1)\supseteq \mathcal F(A)=C$.
Since every TC-algebra $B$ acts on $\mathcal F(B)$ as on a
left module by the rule
\[
a(f)\cdot b = (a\oo{f} b), \quad a,b\in \mathcal F(B),\ f\in H,
\]
we have $A_1\cdot \mathcal F(A_1) \subseteq \mathcal F(A_1)$.
Since $A_1$ is dense, $\mathcal F(A_1)$ is a left ideal of
$\Cend^n_N = \mathcal F(\mathbb M_N(\mathbb A_n))$.
It follows from \eqref{eq:f-product} that $A_1$ is a left ideal
of $\mathbb M_N(\mathbb A_n)$.
\end{proof}

For $n=1$, the last theorem allows to deduce what is a structure
of simple and semisimple conformal algebras with finite
faithful representation \cite{Ko2}.
In the language of TC-algebras, these results may be stated as
follows.

\begin{thm}
Let $A$ be a TC-algebra over $H=\Bbbk[T]$
with finite faithful representation.

{\rm (i)} If $A$ is simple then $A\simeq \mathbb M_N(H)$
or $A\simeq \mathbb A_{1,N,Q}$, $\det Q\ne 0$.

{\rm (ii)} If $A$ is semiprime then $A$ is isomorphic
to a finite direct sum of simple ones from~(i).
\end{thm}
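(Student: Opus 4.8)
The plan is to reduce the statement to the previously established Theorem~\ref{thm:irred} together with the classical Wedderburn--Artin machinery applied to the ambient matrix Weyl algebra. First I would exploit the hypothesis that $A$ has a finite faithful TC-representation: by Proposition~\ref{prop:conf} the space $C=\mathcal F(A)$ is an associative conformal algebra, and by definition the faithful representation embeds $A$ into $\End_\Bbbk M$ for a finitely generated $H$-module $M$. Since $H=\Bbbk[T]$, a finitely generated faithful module can be taken to be free of finite rank $N$ (torsion would contradict faithfulness after a standard argument, as $A$ acts by $\partial$-invariant operators and $H$ is a PID), so $M\simeq H\otimes \Bbbk^N$ and $\End_\Bbbk M \supseteq \mathbb M_N(\mathbb A_1)$ with $A$ realized as a TC-subalgebra of $\mathbb M_N(\mathbb A_1)$. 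This puts us exactly in the setting of Theorem~\ref{thm:irred}.

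For part~(i), assume $A$ is simple. One first checks that the enlarged algebra $A_1=\Bbbk[p_1]A$ acts irreducibly on $M$: irreducibility follows because an $A_1$-invariant (equivalently $A$-invariant and $p_1$-invariant, hence $H$-invariant) subspace of $M$ would, via the correspondence $\mathcal A(\mathcal F(-))$, produce a proper TC-ideal of $A$, contradicting simplicity. Then Theorem~\ref{thm:irred} tells us $A_1$ is a dense left ideal of $\mathbb M_N(\mathbb A_1)$ with respect to the $q$-adic topology. The remaining work is to translate "dense left ideal of $\mathbb M_N(\mathbb A_1)$, closed under the TC-structure" into the two listed normal forms. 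The key observation is that $\mathbb A_1$ is a principal left ideal domain, so a left ideal of $\mathbb M_N(\mathbb A_1)$ has the shape $\mathbb M_N(\mathbb A_1)Q$ for a matrix $Q$ over $\mathbb A_1$; the density and $\partial_1$-invariance force $Q$ to be conjugate to a matrix over $\Bbbk[p_1]$ (since $[\,\cdot\,,p_1]$ must preserve the ideal and this pins the ideal down to its "polynomial part"), giving $A\simeq \mathbb A_{1,N,Q}$; and when $Q$ is invertible this collapses to $A\simeq \mathbb M_N(\mathbb A_1)$, while the degenerate possibility $A_1=\mathbb M_N(H)$ (the current algebra, when $A$ acts by $H$-linear endomorphisms only) gives $A\simeq \mathbb M_N(H)$. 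I expect this normal-form step --- carefully tracking how the TC-structure and $q$-adic density constrain the shape of $Q$ up to isomorphism --- to be the main technical obstacle; everything else is bookkeeping built on Theorem~\ref{thm:irred}.

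For part~(ii), assume $A$ is semiprime. By Proposition~\ref{prop:semisimple}, $C=\mathcal F(A)$ is a semiprime associative conformal algebra, and it is finitely generated over $H$ precisely when $A$ has finite faithful representation with $\mathcal F(A)$ finitely generated; in the general (not necessarily finite) case one still has enough structure because the faithful representation is finite. The plan is then the standard descending chain / minimal ideal argument: using the faithful finite representation one shows that $A$ has a minimal nonzero TC-ideal $B_1$, that $B_1$ is itself simple as a TC-algebra and its annihilator $B_1^\perp$ is again a semiprime TC-algebra with finite faithful representation (on the quotient module), and that $A=B_1\oplus B_1^\perp$ since the representation is faithful and $B_1\cap B_1^\perp=0$. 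Iterating and using that the representation is finite-dimensional over $H$ (so the process terminates), we obtain $A$ as a finite direct sum of simple TC-algebras, each of which is of one of the two types in~(i). The only subtlety here is ensuring termination without a finiteness hypothesis on $\mathcal F(A)$ itself; this is handled by noting that each summand $B_i$ occupies an $H$-direct summand of the finitely generated module $M$, so at most $N$ summands can occur.
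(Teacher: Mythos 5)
Your proposal takes a different route from the paper's, and it contains genuine gaps. The paper's proof is short by design: it passes to the conformal algebra $C=\mathcal F(A)$, applies Proposition~\ref{prop:semisimple} to conclude that $C$ is semiprime, and then invokes the classification from \cite{Ko2} of semiprime associative conformal algebras with finite faithful representation (namely $C\simeq\bigoplus_i C_i$ with $C_i\simeq \Curr_{N_i}$ or $C_i\simeq\Cend_{N_i,Q_i}$, $\det Q_i\ne 0$); translating back via $\mathcal A$ gives (ii), and (i) follows. All the hard work lives in the cited paper. You instead try to reprove that classification at the TC-level from Theorem~\ref{thm:irred}, and the steps you wave at are exactly the ones that constitute \cite{Ko2}.

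Concretely: (1) your normal-form step rests on the assertion that $\mathbb A_1$ is a principal left ideal domain. It is not: $\mathbb A_1$ is a hereditary Noetherian domain possessing non-principal (projective, non-free) left ideals, so ``every left ideal of $\mathbb M_N(\mathbb A_1)$ has the shape $\mathbb M_N(\mathbb A_1)Q$'' is false as stated. That the particular dense, $\partial$-invariant left ideals arising here do have this form is precisely the hard theorem of \cite{Ko2}, not a consequence of ring-theoretic generalities, and you explicitly defer this step (``the main technical obstacle'') rather than prove it. (2) In part (i) you claim that simplicity of $A$ forces $A_1=\Bbbk[p_1]A$ to act irreducibly on $M$ because an invariant subspace ``would produce a proper TC-ideal''. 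An invariant subspace does not yield an ideal of $A$; a simple algebra can act faithfully but reducibly (for instance on a direct sum of two copies of an irreducible module). To reach the hypotheses of Theorem~\ref{thm:irred} you would need to extract an irreducible faithful constituent, which requires a composition-series argument for the module $M$ (infinite-dimensional over $\Bbbk$) that you do not supply. (3) In part (ii) the existence of a minimal nonzero ideal and the termination of the peeling-off process are asserted rather than proved. None of this is fatal to the overall strategy --- it is essentially the strategy carried out in \cite{Ko2} --- but as written the proposal does not close the argument, whereas the paper legitimately closes it by citation.
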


\begin{proof}
By Proposition \ref{prop:semisimple}, if $A$ is semiprime
then $C=\mathcal F(A)$ is semiprime.
In \cite{Ko2} it was shown that a semiprime conformal algebra
$C$
with finite faithful representation is isomorphic to
a direct sum of algebras $C_i$, $C_i\simeq \Curr _{N_i} $ or
 $C_i\simeq \Cend_{N_i,Q_i}$, $\det Q_i\ne 0$.
Therefore, $A=\mathcal A(C) = \mathcal A\bigg(
\bigoplus_{i} C_i \bigg) = \bigoplus_{i} \mathcal A(C_i)$,
that proves~(ii).
Note that $A$ is even semisimple.
Statement (i) is now obvious.
\end{proof}

\begin{thm}
If $A$ is a TC-algebra with finite faithful representation
then its Jacobson radical is nilpotent.
\end{thm}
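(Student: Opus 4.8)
The plan is to use the faithful representation to realise $A$ concretely as an algebra of differential operators, and then to control $\mathrm{Rad}(A)$ through its action on the module. Let $\rho\colon A\to\End_\Bbbk M$ be a faithful TC-representation with $M$ finitely generated over $H$. Since $\partial_i=[\,\cdot\,,T_i]$ on $\End_\Bbbk M$ and $\rho$ is $\partial_i$-invariant, the $k$-fold nested commutator of $\rho(a)$ with $T_{i_1},\dots,T_{i_k}$ equals $\pm\rho(\partial_{i_1}\cdots\partial_{i_k}a)$, which vanishes for $k$ large because each $\partial_i$ is locally nilpotent on $A$; hence every $\rho(a)$ is a differential operator on $M$. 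Reducing to the case of a free module $M=H\otimes\Bbbk^N$ — the first point that needs care — I may therefore assume that $A$ is a TC-subalgebra of $\mathbb M_N(\mathbb A_n)=\mathrm{Diff}_\Bbbk(M)$, as in Theorem~\ref{thm:irred}.

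Put $J=\mathrm{Rad}(A)$. Local nilpotency of each $\partial_i$ makes $\exp(t\partial_i)$ a $\Bbbk$-algebra automorphism of $A$ for every $t\in\Bbbk$, so it preserves $\mathrm{Rad}(A)$; as $\exp(t\partial_i)(j)$ is a polynomial in $t$ with values in $J$ and $\Bbbk$ is infinite, all of its coefficients — in particular $\partial_ij$ — lie in $J$. Using $\partial_iJ\subseteq J$ together with $T_i\rho(j)=\rho(j)T_i-\rho(\partial_ij)$, one checks that pushing a multiplication operator $T_i$ through a product $\rho(j_1)\cdots\rho(j_k)$ with $j_\ell\in J$ only creates further products of $k$ elements of $J$; hence each $J^kM$ is an $H$-submodule of $M$. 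Since $M=H\otimes\Bbbk^N$ is torsion-free over the domain $H$, every nonzero $H$-submodule of $M$ has support all of $\mathrm{Spec}\,H$, so the descending chain $M\supseteq JM\supseteq J^2M\supseteq\cdots$ admits only two alternatives: either $J^{k_0}M=0$ for some $k_0$ — and then faithfulness of $\rho$ gives $J^{k_0}=0$, as desired — or $J^kM\neq0$ for every $k$.

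Excluding the second alternative is where I expect the real work. I would localise at the generic point $\eta$ of $\mathrm{Spec}\,H$: differential operators localise, so $A$ and $J$ act on $M_\eta=\mathbb F^N$, finite-dimensional over the field $\mathbb F=\Bbbk(T_1,\dots,T_n)$; writing $A_\eta$ and $J_\eta$ for the images of $A$ and $J$ in $\mathbb M_N(\mathcal D)$, where $\mathcal D=\mathbb F\langle\partial_1,\dots,\partial_n\rangle$, one has $J_\eta\subseteq\mathrm{Rad}(A_\eta)$ (ring surjections carry the radical into the radical) and $J_\eta^kM_\eta=(J^kM)_\eta\neq0$ for all $k$, since the $J^kM$ are nonzero and torsion-free. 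So it suffices to prove that $\mathrm{Rad}(A_\eta)$ is nilpotent for a subalgebra $A_\eta$ of $\mathbb M_N(\mathcal D)$ acting faithfully on the finite-dimensional $\mathbb F$-space $M_\eta$ — the ``zero-dimensional'' case of the theorem, now over $\mathbb F$ in place of $\Bbbk$ — where I would imitate the classical Wedderburn argument, using the enveloping ring $\mathbb FA_\eta$ (which is again a ring, by $\partial_i$-invariance of $A_\eta$) and the finite $\mathbb F$-length of $M_\eta$. Carrying out this base case, and the bookkeeping in the localisation, is the step I foresee as the main obstacle.

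An alternative that stays closer to the paper's machinery is to work with the conformal algebra $C=\mathcal F(A)$: the set $K=\mathcal F(J)$ is a conformal ideal of $C$, the identity $a(f)b(g)=(a\oo{f_{(1)}}b)(f_{(2)}g)$ from the proof of Proposition~\ref{prop:semisimple} gives $J^k\subseteq\mathcal A(K^{(k)})$ with $K^{(k)}$ the span of the $k$-fold conformal products of $K$, and one would invoke Proposition~\ref{prop:semisimple} together with the structure theory of semiprime conformal algebras with finite faithful representation to conclude that $K^{(k)}=0$ for $k\gg0$. As that structure theory is at present available only for $n=1$ (\cite{Ko2}), this route meets the same obstacle.
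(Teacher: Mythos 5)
Your primary route has a genuine gap, and you have correctly located it yourself: the ``base case'' after localisation is not the classical Wedderburn situation. After passing to the generic point, $A_\eta$ sits inside $\mathbb M_N(\mathcal D)$ with $\mathcal D=\mathbb F\langle\partial_1,\dots,\partial_n\rangle$ infinite-dimensional over $\mathbb F$, and the action of $A_\eta$ on $M_\eta=\mathbb F^N$ is not $\mathbb F$-linear; so the finite $\mathbb F$-dimension of $M_\eta$ does not give finite length of $M_\eta$ as an $A_\eta$-module, and the standard argument (the radical annihilates every composition factor of a faithful module of finite length, hence is nilpotent) does not apply as stated. Some finiteness on $A_\eta$, or on the lattice of its submodules of $M_\eta$, would have to be manufactured, and that is essentially the content of the theorem rather than a reduction of it. As written, the argument is a plan whose central step is missing.

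The ``alternative'' you set aside is in fact the paper's proof. The paper passes to $C=\mathcal F(A)$, invokes the result of \cite{Ko2} that an associative conformal algebra with finite faithful representation has a maximal nilpotent ideal $R$ with semisimple quotient, checks that $C/R$ again has a finite faithful representation, and transfers back: $\mathcal A(R)$ is a nilpotent ideal of $A$ and $A/\mathcal A(R)\simeq\mathcal A(C/R)$ is semisimple, whence $J(A)=\mathcal A(R)$ is nilpotent. Note that this goes from the conformal radical $R$ down to $\mathcal A(R)$, which avoids your step $J^k\subseteq\mathcal A(K^{(k)})$ with $K=\mathcal F(J)$ and the attendant question of whether $J=\mathcal A(\mathcal F(J))$. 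Your worry that the requisite structure theory is available only for $n=1$ is accurate, but it is not an obstacle from the paper's standpoint: this theorem, like the one preceding it, is implicitly stated for $H=\Bbbk[T]$, where \cite{Ko2} supplies exactly the needed input. Had you committed to this route in that case, you would have reproduced the intended proof.
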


\begin{proof}
Consider $C=\mathcal F(A)$.
It was shown in \cite{Ko2} that an associative conformal algebra
with finite faithful representation has a maximal nilpotent
ideal (radical).

Suppose $R$ is the radical of $C$.
It is easy to show that $C/I$ is a conformal algebra with
finite faithful representation.
Then
$\mathcal A(R)\subseteq A$ is a nilpotent ideal, and
$A/\mathcal A(I)$ is isomorphic to $ \mathcal A(C/I)$
which is semisimple. Therefore, $J(A)=\mathcal A(R)$.
\end{proof}

\section{Open problems}

{\bf 1.} Describe irreducible conformal
subalgebras of $\Cend_N^n$ for $n>1$,
i.e., those $C$ that $\Bbbk[p_1,\dots, p_n]\mathcal A(C)$
acts irreducibly on $M=H\otimes \Bbbk^N$.

This is equivalent to the problem of description all
TC-subalgebras $A\subseteq \mathbb M_N(\mathbb A_n)$
such that $\Bbbk[p_1,\dots, p_n]A$ acts irreducibly
on $M=H\otimes \Bbbk^N$.
Theorem \ref{thm:irred} provides an important property
of such algebras, however, the complete description
is obtained only for $n=1$ \cite{Ko2} (for $n=0$ this is the
classical Burnside's theorem).
For $n=1$, the corresponding TC-algebras are (up to automorphism)
$\mathbb M_N(H)$ and $W_{N,Q}=\mathbb M_N(\mathbb A_1)Q(p)$,
$\det Q\ne 0$.

\medskip
{\bf 2.} Describe irreducible Lie conformal subalgebras
of $\mathrm{gc}^n_N = \mathcal F(\mathrm{gl}(H\otimes \Bbbk^N))$,
$H=\Bbbk[T_1,\dots, T_n]$.

A great advance in this problem was obtained in \cite{DSK}
and in \cite{Z2}, but there is no complete solution even for $N=n=1$.
In \cite{BKL}, the following conjecture was stated:
if $C$ is an infinite irreducible Lie conformal subalgebra
of $\mathrm{gc_N^1}$ then the corresponding
TC-algebra $L=\mathcal A(C)$ is isomorphic
either to $W_{N,Q}^{(-)}$ or to $\mathrm{Skew}(W_{N,Q}, \sigma)$
for an appropriate TC-involution $\sigma $ of $W_{N,Q}$,
$\det Q\ne 0$.

\end{document}